\title{A time-invariant random graph with splitting events}
\author{
  Agelos~Georgakopoulos\footnote{University of Warwick. \texttt{j.haslegrave@cantab.net}}
  \and
  John~Haslegrave\footnotemark[1]}
\newtheorem{theorem}{Theorem}[section]
\newtheorem{lemma}[theorem]{Lemma}
\newtheorem{corollary}[theorem]{Corollary}
\newtheorem{proposition}[theorem]{Proposition}
\newtheorem{definition}[theorem]{Definition}
\newtheorem{remark}[theorem]{Remark}
\newtheorem{claim}[theorem]{Claim}
\newenvironment{poc}{\begin{proof}[Proof of Claim]}{\end{proof}}
\newcommand{\eps}{\varepsilon}
\newcommand*{\cmot}[1][t]{G^\circ_{#1}}
\newcommand*{\dmot}[1][t]{G^{\circ\circ}_{#1}}
\newcommand*{\dmotf}[1][t]{\overline{G}^{\circ\circ}_{#1}}
\newcommand*{\mean}[1]{\mathbb{E}(#1)}
\newcommand*{\prob}[1]{\mathbb{P}(#1)}
\newcommand*{\abs}[1]{\lvert #1\rvert}
\newcommand*{\floor}[1]{\lfloor #1\rfloor}
\newcommand*{\ceil}[1]{\lceil#1\rceil}
\newcommand*{\ee}[1]{\mathrm{e}^{#1}}
\newcommand*{\Po}[1]{\operatorname{Po}(#1)}
\newcommand*{\cmp}[1]{#1^{\scriptscriptstyle\complement}}
\newcommand{\clock}{\operatorname{Exp}(1)}
\newcommand*{\gft}[1][t]{\overline{G}_{#1}}
\newcommand*{\gftc}[1][t]{\overline{G}^\circ_{#1}}
\newcommand{\Lr}[1]{Lemma~\ref{#1}}
\newcommand{\Tr}[1]{Theorem~\ref{#1}}
\newcommand{\Sr}[1]{Section~\ref{#1}}
\newcommand{\Dr}[1]{Definition~\ref{#1}}
\newcommand{\defi}[1]{{\emph{#1}}}
\newcommand{\ie}{i.e.\ }
\newcommand{\iid}{i.i.d.\ }
\newcommand{\la}{\lambda}
\newcommand{\lest}{\leq_{\mathrm{st}}}
\begin{document}
\maketitle 
\begin{abstract}We introduce a process where a connected rooted multigraph evolves by splitting events on its vertices, occurring randomly in continuous time. When a vertex splits, its incoming edges are randomly assigned between its offspring and a Poisson random number of edges are added between them. The process is parametrised by a positive real $\la$ which governs the limiting average degree. We show that for each value of $\la$ there is a unique random connected rooted multigraph $M(\la)$ invariant under this evolution. As a consequence, starting from any finite graph $G$ the process will almost surely converge in distribution to $M(\la)$, which does not depend on $G$. We show that this limit has finite expected size. 
The same process naturally extends to one in which connectedness is not necessarily preserved, and we give a sharp threshold for connectedness of this version. 

This is an asynchronous version, which is more realistic from the real-world network point of view, of a process we studied in \cite{gwrg,PIGG}.

\noindent KEYWORDS: random graphs; reproducing graphs; convergence; birth process.

\noindent MSC2020: {05C82}; {05C80; 60C05; 90B15}
\end{abstract}

\section{Introduction}
We consider a random network model with reproduction which evolves in continuous time. Each vertex independently, at rate $1$, splits into two. When a vertex splits, each of its existing edges is randomly rerouted to one of the two vertices produced, and these two vertices are connected by a random number of edges with distribution $\Po{\la/2}$, where $\la>0$ is a fixed parameter. If the resulting graph is disconnected, only the component of the root is retained (the precise definition is given in the next section). We  show that there is a unique random multigraph $M(\la)$ which is time-invariant under this evolution and has finite average degree (\Tr{limit}), and analyse some of its properties. As a consequence, if we run our process starting from any finite graph $G$, it will  almost surely converge in distribution to  $M(\la)$.

This model arose naturally in our recent work \cite{PIGG}: there, we considered the variant of the above evolution where all vertices split simultaneously in regular time intervals. We observed that there is a unique finite-degree random multigraph $G(\la)$ which is time-invariant under this evolution too. We will refer to $G(\la)$ as the \defi{synchronous version} of $M(\la)$. Moreover, we showed that  $G(\la)$ is identically distributed with the cluster of the origin in an instance of long-range percolation on the infinitely-generated group $\bigoplus_{i\in\mathbb{N}}\mathbb{Z}_2$. Perhaps surprisingly, given its alternative definition as a cluster of a percolation model on a group, and given that most percolation models on finitely generated groups undergo a phase transition \cite{DCGRSY},  $G(\la)$ is almost surely finite for any value of the intensity $\la$, and its expected size is finite. In this paper we show the analogous result for $M(\la)$ (\Tr{finite}).

Our splits can be thought of as reproduction of vertices, in the sense that a vertex produces a child and then passes on some of its connections to its child. In this sense, our first definition of $G(\la)$ is reminiscent of the models for random reproducing graphs studied by Jordan \cite{Jor11}, building on earlier deterministic models for social networks \cite{SC10,BHHPW}, with the key distinction being that in Jordan's model all connections of the parent are retained, whether or not they are inherited by the child. 

However, simultaneous, discrete-time reproduction by the whole population is not a realistic model for real-life networks. It is therefore natural to consider a variant in which reproduction events are independent and may occur at any time, which is part of the motivation of the current paper. Mechanisms for growing networks based on repeated vertex duplications have previously been proposed as plausible for the development of the web graph \cite{KRRSTU} and for evolution of biochemical networks \cite{BGD02,VFMV}. Mathematical analysis of such a model was carried out non-rigorously by Pastor-Satorras, Smith and Sole \cite{PSSS}, suggesting a limiting degree distribution which is power-law with an exponential cutoff, although subsequent rigorous work by Bebek, Berenbrink, Cooper, Friedetzky, Nadeau, and Sahinalp \cite{BBCFNS} showed that this is not the case. Another related model, motivated by duplication of genetic material, has been studied by Th\"ornblad \cite{Thorn} and by Backhausz and M\'ori \cite{BM16}; however, the graph structure of this model is particularly simple, being a collection of disjoint cliques. A similar model for a fixed population size, which has richer behaviour owing to the random loss of individual edges, was introduced by Bienvenu, D\'ebarre, and Lambert \cite{BDL19}.

Although the continuous-time model $M(\la)$ studied here is more natural in certain respects, its analysis is significantly more challenging than that of the synchronous version $G(\la)$ for the following reason. A basic tool in the analysis of both models is the underlying \defi{genealogical tree} $T$, containing all vertices in our evolution, and joining each vertex to its children by an edge. Starting with $T$, we can alternatively define our random graphs by joining pairs of leaves of $T$ with random independent edges with appropriately chosen probabilities. In the synchronous case, this $T$ is very simple: it is a binary tree of depth $n$ when we run the process for $n$ steps starting from a single vertex, and it is the so-called canopy tree when we start with $G(\la)$. When we start with $M(\la)$ however, $T$ is a random tree with a non-trivial distribution: it can be thought of as the local limit of the ball $B(t)$ of radius $t$ in first passage percolation on the full binary tree after re-rooting $B(t)$ at a leaf (see \Sr{secAsync} for more details). Thus our main results \Tr{limit} and \Tr{finite} below were much harder to prove than their analogues in \cite{PIGG}.

\subsection{Model and results}
It will be convenient for some proofs and statements of results to define both the main process defined above and a ``full'' version of the process in which other components are not discarded. In fact it is simpler to define the latter first. A \defi{multigraph} is a graph in which two vertices may be joined by several parallel edges. The multigraphs of this paper do not have loops, i.e.\ edges that start and end at the same vertex. 

\begin{definition}\label{full}For a rooted connected multigraph, $(G,o)$, the \emph{full process} $(\gft,o_t)_{t\geq 0}$ with parameter $\la>0$ is defined as follows. Set $(\gft[0],o_0)=(G,o)$. Give each vertex $v$ a splitting time $\tau_v$, where splitting times are \iid $\clock$ variables. When $t=\tau_v$, replace $v$ with two new vertices $v_1,v_2$, and give each a splitting time of $t+\clock$. Add $\Po{\la/2}$ edges between $v_1$ and $v_2$. Moreover, replace each edge of the form $uv$ with one of the edges $uv_1,uv_2$ chosen uniformly at random. If $v$ was the root, update the root to be $v_1$ or $v_2$, each with probability $1/2$. All these random choices are made independently from each other. Set $(\gft,o_t)$ to be the resultant graph.\end{definition}
We will frequently consider a single-vertex starting graph; we write $\gftc$ in this case.
\begin{remark}\label{geo-yule}The number of vertices of $\gftc$ over time, which is independent of all edge-related events, is a Yule process with rate $r=1$, that is, a pure birth process where the birth rate is $r$ times the population. Its value at time $t$ has a geometric distribution with mean $\ee{rt}$; see \cite[Section XVII.3]{Feller}.\end{remark}

\begin{definition}\label{cluster}The \emph{cluster process} $(G_t,o_t)$ with parameter $\la$ is the rooted connected multigraph formed by the component of the root in $\gft$.\end{definition}
It is natural to think of the cluster process as a reproduction process where individuals die when they leave the component of the root. In this sense it resembles a general branching process, or Crump--Mode--Jagers process, (see e.g.\ \cite[Chapter 6]{jagers}); however, these processes assume independence of the lifespans of different individuals, whereas in our model death events are highly interdependent.

We prove three main results about these processes, listed below.
\begin{theorem}\label{limit}
For each $\la>0$ there is a unique random rooted connected multigraph with finite expected root degree, $(M(\la),o)$, which is invariant under the cluster process in the sense that $(M(\la)_t,o_t)$ has the same distribution for any $t\geq 0$.
\end{theorem}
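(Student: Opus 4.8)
The plan is to construct $M(\la)$ as the limit, in the local topology around the root, of the cluster process started from a single vertex, and then to show that the process from \emph{any} connected rooted multigraph of finite expected root degree has the same limit, which forces uniqueness. The first step is a static description of $\gftc$. The genealogy is a Yule tree $T_t$ --- a random binary tree in which each vertex has an independent $\clock$ lifespan and is kept if born by time $t$ --- whose leaves are the vertices of $\gftc$; the root $o_t$ is the leaf reached from the initial vertex by following a uniformly random child at each split. Conditionally on $T_t$, the $\Po{\la/2}$ edges created at each split perform independent uniform down-walks at subsequent splits, so by Poisson thinning and splitting the number of $\gftc$-edges between two leaves $u,v$ is an independent $\Po{2\la\,2^{-d_{T_t}(u,v)}}$ variable, where $d_{T_t}$ is the tree distance, and $\cmot$ is the root component of this graph. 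Since a ball of fixed radius about $o_t$ in $\gftc$ meets only that component, it coincides with the corresponding ball of $\cmot$, so it suffices to understand $\gftc$ locally about $o_t$. The relevant identity is that $\sum_v 2^{-\operatorname{depth}(v)}=1$ over the leaves $v$ of any Yule tree (true for a single vertex, preserved by splits), and more precisely that the $2^{-\operatorname{depth}}$-weighted random leaf of a Yule tree of age $a$ has depth $\Po a$. Grouping the leaves $v\ne o_t$ of $T_t$ by their most recent common ancestor with $o_t$ gives $\sum_{v\ne o_t}2^{-d_{T_t}(o_t,v)}=\sum_{j\ge1}2^{-(j+1)}\mathbf 1[\operatorname{depth}(o_t)\ge j]<\tfrac12$, so that $\mean{\deg_{\gftc}(o_t)\mid T_t}<\la$ uniformly in $t$; the same identity bounds the conditional second moment by $\la+\la^2$, so root degrees are uniformly integrable. (In fact the same bounds hold at every vertex.)

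\textbf{Existence.} I would then identify the local limit $(T_\infty,\rho)$ of $(T_t,o_t)$: since the root lineage splits at rate $1$, $\operatorname{depth}(o_t)\sim\Po t$ and the split times along it form a rate-$1$ Poisson process on $[0,t]$, which read backwards from $t$ makes the ages of the subtrees hanging off the lineage converge to a renewal sequence with $\clock$ increments and the subtrees themselves converge to independent Yule trees of those ages. This yields the explicit one-ended $T_\infty$ --- the limit of first-passage-percolation balls on the binary tree re-rooted at the boundary discussed in \Sr{secAsync} --- an infinite spine carrying at its $j$-th vertex an independent Yule tree of age $\Gamma(j,1)$. A ball of fixed radius about $o_t$ in $\gftc$ has finite expected size by the degree bound, and with error probability tending to $0$ uniformly in $t$ it is a local function of a large ball of $T_t$ about $o_t$: the chance that exploration from $o_t$ uses an edge to a leaf at $T_t$-distance $\ge K$ is at most $2\la\,\mean{\sum_{v:\,d_{T_t}(o_t,v)\ge K}2^{-d_{T_t}(o_t,v)}}$, and the same decomposition (now with the tails $\prob{\Po a\ge m}$) makes this tend to $0$ as $K\to\infty$. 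Combined with convergence of arbitrarily large balls of $(T_t,o_t)$, it follows that $(\cmot,o_t)$ converges in distribution to $M(\la)$, the root component of the graph obtained from $T_\infty$ by joining each pair of leaves $u,v$ by $\Po{2\la\,2^{-d_{T_\infty}(u,v)}}$ independent edges; and $\mean{\deg_{M(\la)}(o)}=\lim_t\mean{\deg_{\cmot}(o_t)}<\infty$ by uniform integrability.

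\textbf{Invariance and uniqueness.} The root component never absorbs anything from outside itself --- splitting and rerouting can only disconnect --- so running the cluster process from $\cmot$ for a further time $s$ returns exactly $\cmot[t+s]$; since the cluster semigroup is continuous for local convergence (the evolved ball of radius $r$ about the new root depends only on a ball of random but tame radius about $o_t$), $M(\la)$ evolved for time $s$ equals $\lim_t\cmot[t+s]=M(\la)$ in distribution, which is the required invariance. For uniqueness I would show that the cluster process from any rooted connected multigraph $(G,o)$ with $\mean{\deg_G(o)}<\infty$ also converges locally to $M(\la)$: couple it with the single-vertex process, reusing the same genealogy, split edges and rerouting inside the genealogical tree of $o$, so that a ball of radius $r$ about $o_t$ agrees in the two processes unless some original edge of $G$ incident to $o$ is routed to within distance $r$ of $o_t$. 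All time-$t$ leaves have depth $(1-o(1))t$ with high probability and the relevant neighbourhood of $o_t$ has uniformly bounded expected size, so each of the $\deg_G(o)$ candidate edges does this with probability tending to $0$ and the discrepancy vanishes. Applying this with $(G,o)$ distributed according to an invariant law $\mu$ of finite expected root degree, and using that $\mu$ is stationary, yields $\mu=\operatorname{law}(M(\la))$.

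\textbf{The main obstacle.} The conservation identity makes the degree bookkeeping essentially free; the real work is geometric. It consists in establishing the local convergence of the random genealogical tree $(T_t,o_t)$ --- which, unlike the canopy tree of the synchronous model, has a genuinely non-trivial law --- and, above all, in propagating enough uniform tail control (on tree-distances from $o_t$ to nearby leaves, and on the depths of the level-$t$ leaves) through the two non-local operations involved: passing to the root component, and running the dynamics for a further time~$s$.
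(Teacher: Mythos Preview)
Your architecture matches the paper's: both build the same one-ended random tree $T_\infty$ (an infinite spine with an independent Yule tree of age $\Gamma(j,1)$ attached at the $j$th vertex), define $M(\la)$ as the root cluster of the $T_\infty$-Poisson edge model, show $\cmot\to M(\la)$ in distribution, and deduce uniqueness from the fact that the process from any admissible $(G,o)$ has the same limit. The conservation identity you use for degree bounds is implicit in the paper's \Lr{edge-cross}. Where you diverge is in the two technical pillars, and in each case the paper takes a shorter and more robust route.

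\emph{Invariance.} You deduce invariance from continuity of the cluster semigroup under local convergence, together with $\cmot\to M(\la)$. The paper instead proves invariance directly: one can realise $(M(\la)_t,o_t)$ as the same construction applied to a path $P_t$ obtained from $P$ by prepending the gaps of an independent rate-$1$ Poisson process on $[-t,0]$; but $P_t\stackrel{d}{=}P$ by translation invariance of the Poisson process, so $(M(\la)_t,o_t)\stackrel{d}{=}(M(\la),o)$. This two-line argument avoids entirely the need to establish Feller continuity of the dynamics.

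\emph{Locality / finiteness of the cluster.} You try to show that a radius-$r$ ball of $\gftc$ about $o_t$ is, with high probability, determined by a large $T_t$-ball about $o_t$. Your bound $2\la\,\mean{\sum_{v:\,d_{T_t}(o_t,v)\ge K}2^{-d_{T_t}(o_t,v)}}$ controls only the \emph{first} edge out of $o_t$; to propagate it through $r$ steps of exploration you would need the analogous tail to be small uniformly over every vertex you encounter, and it is not (a leaf deep inside one of the attached subtrees can have almost all of its $2^{-d}$-mass at large tree-distance). The paper sidesteps this with a single observation (\Lr{edge-cross}): for \emph{any} binary tree and \emph{any} edge $uv$, the probability that no Poisson-model edge crosses $uv$ is at least $\ee{-\la}$, because the product $\bigl(\sum_{x\in L_u}2^{-d(x,u)}\bigr)\bigl(\sum_{y\in L_v}2^{-d(y,v)}\bigr)\le1$. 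Iterating along the spine shows that almost surely some spine edge is uncrossed, hence $M(\la)$ is confined to finitely many subtrees; and the same bound makes the coupling between $M(\la)$ and $\cmot$ exact once the finite path is long enough.

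\emph{Uniqueness.} Here there is a genuine gap. Your argument that each old edge's endpoint $o'_t$ lands in $B_r(o_t)$ with vanishing probability rests on the assertion that ``all time-$t$ leaves have depth $(1-o(1))t$ with high probability''. This is false: the expected number of depth-$k$ leaves of a Yule tree of age $t$ is $2^k\,\prob{\Po t=k}$, which tends to infinity for $k=ct$ throughout a range of $c$ strictly below $1$ (for instance $c=1/2$, since $2^t\ee{-t/2}\to\infty$). So the minimum leaf depth is $\Theta(t)$ with a constant well below $1$, and the bound $\abs{B_r(o_t)}\cdot 2^{-(1-o(1))t}$ is not available. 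The paper's \Lr{onevertex} avoids any depth estimate: it tracks, for each old edge $e$, the number $X_k$ of \emph{new} edges meeting the descendant-of-$o$ endpoint of $e$ after its $k$th split; once this endpoint has separated from $o_t$, $(X_k)$ is a positive-recurrent Markov chain on $\mathbb N$ (with stationary law $\Po\la$) and hence hits $0$ almost surely, ``killing'' $e$ in the sense that any root-path through $e$ must first use another old edge. Once all $\deg_G(o)$ old edges are killed, $(G_t,o_t)=(H_t,o_t)$ exactly, not merely locally.

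In short: your plan is correct in outline, but the two lemmas you are missing --- the edge-crossing bound and the killing argument --- are precisely what let the paper replace the delicate geometric control you identify as ``the main obstacle'' with short, tree-independent probabilistic estimates.
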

It is not immediately obvious that $M(\la)$ is almost surely finite. However, we prove a much stronger result.
\begin{theorem}\label{finite}
$\mean{\abs{M(\la)}}<\infty$ for every $\la>0$.
\end{theorem}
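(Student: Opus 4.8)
\section*{Proof proposal}

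The plan is to bound $\mean{\abs{M(\la)}}$ directly using the description of $M(\la)$ through its genealogical tree $\mathcal T$, which I expect to be set up in \Sr{secAsync}: $\mathcal T$ has a semi\nobreakdash-infinite spine $o=w_0,w_1,w_2,\dots$, with independent $\clock$ gaps between consecutive $w_j$, carrying at each $w_j$ a finite bush $B_j$ which, conditionally on the gaps, is distributed as $\gftc[A_j]$ for $A_j$ the sum of the first $j$ gaps; and conditionally on $\mathcal T$ one obtains $M(\la)$ by placing, independently over pairs of leaves $u,v$, a $\Po{2\la2^{-d_{\mathcal T}(u,v)}}$ number of parallel $uv$\nobreakdash-edges and keeping the component of $o$. (Equivalently one could reduce to a uniform bound on $\mean{\abs{G_t}}$ for the cluster process from a single vertex, using that $(G_t,o_t)\to(M(\la),o)$ in the local topology by \Tr{limit} and that the number of vertices of a rooted multigraph is lower semicontinuous, which gives $\mean{\abs{M(\la)}}\le\liminf_t\mean{\abs{G_t}}$.) The edge description itself is obtained by tracing each edge back to the split at which it was born, between the two children of its endpoints' most recent common ancestor, and using Poisson thinning.

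The one combinatorial input I would extract is that, because every internal vertex of $\mathcal T$ has exactly two children, $\sum_{v\ \text{leaf},\,v\ne u}2^{-d_{\mathcal T}(u,v)}=\tfrac12$ for every leaf $u$ (and the analogous sum in a finite Yule tree is $<\tfrac12$); equivalently the symmetric nonnegative kernel $W$ with $W_{uv}=2\la2^{-d_{\mathcal T}(u,v)}$ has every row sum equal to $\la$. For $\la<1$ this already finishes the proof: conditionally on $\mathcal T$, a union bound over all walks in the complete graph on the leaves gives
\[
 \mean{\abs{M(\la)}\mid\mathcal T}=\sum_v\prob{v\leftrightarrow o\mid\mathcal T}\le\sum_{k\ge0}(W^k\mathbf1)_o\le\sum_{k\ge0}\la^k=(1-\la)^{-1},
\]
using $\|W\|_{\ell^\infty\to\ell^\infty}=\max_u\sum_vW_{uv}=\la$; hence $\mean{\abs{M(\la)}}\le(1-\la)^{-1}$.

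For $\la\ge1$ the walk bound diverges, and pushing the argument through is the crux. Two features make it delicate. First, one cannot afford to count a whole bush whenever the cluster touches it, since $\mean{\abs{B_j}}=\mean{\ee{A_j}}=\infty$ (the size of $\gftc[a]$ has mean $\ee a$ by Remark~\ref{geo-yule}, and $\mean{\ee{\clock}}=\infty$); the feature to exploit against this is that an edge entering an age\nobreakdash-$a$ bush lands on a leaf at depth $\delta$ with probability of order $2^{-\delta}$, which is precisely what compensates the $\ee a$ growth of the expected bush size. Secondly, the two offspring of a split are joined by a $\Po{\la/2}$ number of edges, which may exceed $1$, so the component of $o$ can re\nobreakdash-enter a bush several times, each re\nobreakdash-entry inflating it. My plan is to work along the spine: a short computation with the identity above shows that the edges crossing the cut separating $\{o\}\cup B_1\cup\dots\cup B_{j-1}$ from the rest of $\mathcal T$ number $\Po\la$, independently of all the gaps, and that their endpoints on the $\{o\}$\nobreakdash-side are conditionally independent uniform\nobreakdash-descent leaves of the corresponding subtree. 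Tracking, as $j$ increases, the random number of these crossing edges whose $\{o\}$\nobreakdash-side endpoint lies in the component of $o$ (together with enough extra information about how deep the cluster reaches into $B_j$) gives a Markov chain absorbed at $0$; bounding the expected number of vertices of $B_j$ in $M(\la)$ in terms of this state, and showing that absorption happens fast enough, would then yield $\mean{\abs{M(\la)}}<\infty$ via $\mean{\abs{M(\la)}}=1+\sum_{j\ge1}\mean{\abs{V(B_j)\cap V(M(\la))}}$.

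The step I expect to be the main obstacle is obtaining that per\nobreakdash-bush bound uniformly in the heavy\nobreakdash-tailed ages $A_j$ while simultaneously controlling the re\nobreakdash-entry feedback from multi\nobreakdash-edge offspring connections: a single crude pass, one level at a time, loses too much, so what seems to be needed is a genuine multi\nobreakdash-scale decomposition of the component along the spine, and in particular a geometric\nobreakdash-type bound on the total number of re\nobreakdash-entries into any bush that is valid uniformly in the ages. This is where the bulk of the work will lie.
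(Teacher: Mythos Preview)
Your argument for $\la<1$ is correct and much shorter than anything in the paper: the row-sum identity $\sum_{v\ne u}2^{1-d_{\mathcal T}(u,v)}=1$ (which is exactly \Lr{edge-cross} together with the Remark following it) plus the walk bound gives $\mean{\abs{M(\la)}\mid\mathcal T}\le(1-\la)^{-1}$ uniformly in the tree. The paper does not separate out this easy regime, so this is a genuine simplification for small $\la$.

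For $\la\ge1$ what you have is a plan, not a proof, and the obstacle you name is real. You correctly observe that $\mean{\abs{B_j}}=\infty$, so one cannot afford to swallow a bush whole; and the paper in fact opens Section~3 by exhibiting deterministic binary trees on which the Poisson edge model has infinite expected cluster size for large $\la$, so any argument must exploit the randomness of $\mathcal T$ in an essential way. Your spine Markov chain would therefore have to average over the gap and bush laws at each step, and you have not produced a Lyapunov function or any quantitative control on the per-step increment of $\mean{\abs{V(B_j)\cap V(M(\la))}}$; the combination of heavy-tailed ages and multi-edge re-entry makes it unclear that one exists in the form you sketch.

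The paper's key idea, which you are missing, is to abandon the limit tree altogether and instead bound $\sup_t\mean{\abs\cmot}$ via a differential inequality in $t$. Over $[0,\eps]$ the single starting vertex either does not split (contributing $(1-\eps)\mean{\abs\cmot}$) or splits once, in which case $\cmot[t+\eps]$ is two independent copies of $\gftc$ joined by $N\sim\Po{\la/2}$ ``old'' edges. The paper shows that by time $t$ every old edge has, with probability $1-O(\zeta^t)$ for some $\zeta<1$, been \emph{killed} (its left endpoint separated from the root and stripped of new edges), whence the root component sits entirely in one copy; on the complementary events the damage is controlled by dominating the conditioned process by a sub-rate Yule process and by bounding $\mean{N\mid\cdot}$ via \Lr{poisson+}. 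This yields $\mean{\abs{\dmot}}\le(1+h(t))\mean{\abs\cmot}$ with $\int_0^\infty h<\infty$, hence $\mean{\abs\cmot}\le\exp\int_0^\infty h$. The point is that at finite $t$ every age is at most $t$, so the heavy-tail difficulty you are wrestling with simply does not arise.
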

When considering the full process, a natural question is when it becomes disconnected, or equivalently when the full and cluster processes first differ.
\begin{theorem}\label{threshold}The time $t=\la$ is a sharp threshold for both connectedness of $\gftc$ and the existence of isolated vertices, that is, for any $\eps>0$, with high probability as $\la\to\infty$ the graph $\gftc[(1-\eps)\la]$ is connected but $\gftc[(1+\eps)\la]$ is disconnected with isolated vertices.\end{theorem}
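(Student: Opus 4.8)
My proof would treat the two directions of the threshold separately, both through the genealogical-tree picture sketched in the introduction. Run the full process from a single vertex for time $t$ and let $T_t$ be the genealogical tree, a rate-$1$ Yule tree whose leaves are the vertices of $\gftc[t]$; tracing the $\Po{\la/2}$ edges created at each split through the subsequent rerouting shows that, conditionally on $T_t$, the number of $\gftc[t]$-edges between two leaves $u,w$ is $\Po{\la 2^{1-D(u,w)}}$, with $D(u,w)$ the tree distance, independently over pairs. The one elementary fact used throughout is the identity $\sum_{\text{leaves}}2^{-(\text{depth})}=1$ for any rooted binary tree; applied to the sibling subtrees along the path from $u$ to the root it gives $\sum_{w\neq u}2^{1-D(u,w)}=1-2^{-m_u}$ for a leaf $u$ of depth $m_u$, and hence $\prob{u\text{ isolated}\mid T_t}=\ee{-\la(1-2^{-m_u})}$. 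Since $\abs{\gftc[t]}$ is geometric with mean $\ee{t}$ (Remark~\ref{geo-yule}) and the expected number of leaves at depth $m$ is $\ee{-t}(2t)^m/m!$, the expected number of isolated vertices is $\ee{-t-\la}\sum_m\tfrac{(2t)^m}{m!}\ee{\la 2^{-m}}$.

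For $t=(1+\eps)\la$ it suffices to show $\gftc[t]$ has an isolated vertex with high probability, since $\abs{\gftc[t]}\to\infty$ in probability then forces disconnectedness. The above expectation is at least $\ee{-t-\la}\sum_m\tfrac{(2t)^m}{m!}=\ee{\eps\la}$, so I would run a conditional second-moment argument. Given $T_t$, discard the at most $2\la$ leaves of depth $\le\log_2\la$; the remaining ``deep'' leaves have $p_u:=\prob{u\text{ isolated}\mid T_t}=\Theta(\ee{-\la})$, so on the high-probability event $\{\abs{\gftc[t]}\ge\ee{(1+\eps/2)\la}\}$ the conditional mean $\mu:=\sum_{u\text{ deep}}p_u$ satisfies $\mu\gtrsim\ee{\eps\la/2}\to\infty$. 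Since distinct leaf-pairs carry independent Poisson edge-counts, $\prob{u,w\text{ both isolated}\mid T_t}=p_up_w\,\ee{\la 2^{1-D(u,w)}}$, so the second-moment error is $\sum_{u\neq w}p_up_w(\ee{\la 2^{1-D(u,w)}}-1)$; grouping by $D(u,w)$ and using the deterministic bound $\abs{\{w:D(u,w)=D\}}\le\min(2^{D-1},\abs{\gftc[t]})$ shows this is dominated by the $D=2$ contribution and is $\le\mathrm{poly}(\la)\,\ee{-\la/2}\cdot\mu=o(\mu^2)$. Hence $\mean{(\#\text{isolated})^2\mid T_t}=(1+o(1))\mu^2$ on that event, and Paley--Zygmund gives $\prob{\#\text{isolated}\ge1}\to1$.

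For $t=(1-\eps)\la$ I would show $\gftc[t]$ is connected with high probability. First, the above expectation tends to $0$: splitting at $m=\lceil\log_2\la\rceil$, the tail contributes at most $e\,\ee{t-\la}=e\,\ee{-\eps\la}$, while each of the $O(\log\la)$ remaining terms is at most $\ee{O((\log\la)^2)-(1-\eps)\la}$, so with high probability there are no isolated vertices. For connectedness I would bound $\prob{\gftc[t]\text{ disconnected}\mid T_t}\le\sum_{\emptyset\neq A\subsetneq V}\ee{-\la c(A)}$, where $c(A):=\sum_{u\in A,\,w\notin A}2^{1-D(u,w)}$ is the cut weight, using that a disconnected graph has a component $A$ with empty cut. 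Pushing the identity above one level deeper gives $c(A)$ the hierarchical form
\[c(A)=\tfrac12\sum_{w}\bigl(\mu_{w'}(A)+\mu_{w''}(A)-2\mu_{w'}(A)\mu_{w''}(A)\bigr),\]
summed over internal vertices $w$ of $T_t$ with children $w',w''$, where $\mu_v(A)$ is the normalised leaf-mass of $A$ in the subtree rooted at $v$, so that $\mu_v=\tfrac12(\mu_{v'}+\mu_{v''})$ and $\mu_u(A)=\mathbf 1[u\in A]$ at a leaf $u$. The subtree sets $A=V^v$ (for $v$ not the root) give $c(V^v)=1-2^{-m_v}$, and summed over $v$ have expectation $O(\ee{-\la/2}+\ee{-\eps\la})$ by exactly the preceding display now including internal $v$. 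For all other $A$ one must exploit the extra internal cut weight; I would do so by inducting up $T_t$ on the mass-refined partition function $\zeta_v(x):=\sum_{A:\,\mu_v(A)=x}\exp\bigl(-\tfrac\la2\sum_{w\preceq v}(\mu_{w'}+\mu_{w''}-2\mu_{w'}\mu_{w''})\bigr)$, which satisfies $\zeta_v(x)=\sum_{x'+x''=2x}\zeta_{v'}(x')\zeta_{v''}(x'')\,\ee{-\frac\la2(x'+x''-2x'x'')}$ with $\zeta_u(x)=\mathbf 1[x\in\{0,1\}]$ at leaves $u$, and whose total mass at the root is $\sum_{A}\ee{-\la c(A)}$; this must be shown to tend to $2$ in expectation.

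The technical heart, and the main obstacle, is this last induction. The naive recursion for the ``interior mass'' $\sum_{0<x<1}\zeta_v(x)$ loses a bounded multiplicative factor per level of $T_t$, so since $T_t$ has depth of order $\la$ that estimate diverges. The resolution is that $\ee{-\frac\la2(x'+x''-2x'x'')}$ is close to $1$ only when $(x',x'')$ is near $(0,0)$ or $(1,1)$---which corresponds to $A$ being close to a subtree and is already accounted for by the subtree estimate---whereas keeping mass in the deep interior $(\delta,1-\delta)$ at a vertex of depth $\Theta(\la)$ costs $\Theta(\la)$ in the exponent and so decays super-exponentially, swallowing the per-level growth. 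Making this dichotomy quantitative, and propagating it through the random subtrees of $T_t$ (for which only crude height bounds are needed), is where the real work lies; the remaining ingredients are routine given the tree representation and the elementary facts about the Yule tree.
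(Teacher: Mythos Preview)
Your isolated-vertex half is correct and is a self-contained alternative to the paper's argument. The paper does not compute moments at all: it quotes \cite[Lemma~7.1]{PIGG}, which for any binary tree $T$ bounds $\prob{\text{no isolated leaf}\mid T}\le 2/(2+\abs{L(T)}\ee{-\la})$, and then just feeds in the concentration of $\abs{\gftc}$. Your second-moment calculation reproduces this conclusion directly; the bound $\abs{\{w:D(u,w)=D\}}\le 2^{D-1}$ is valid for any binary tree, and the error term is indeed $O(\log\la)\,\ee{-\la/2}\mu=o(\mu^2)$ on the high-probability event, so Paley--Zygmund goes through.

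Your connectedness half, however, has a genuine gap. You propose to control $\sum_{\emptyset\neq A\subsetneq V}\ee{-\la c(A)}$ via the recursion for $\zeta_v$, but you explicitly stop at the point where the argument would need to begin: you note that the naive recursion loses a bounded factor per level over $\Theta(\la)$ levels, sketch an intuition for why mass in the interior should be suppressed, and then say that ``making this dichotomy quantitative\ldots\ is where the real work lies''. That work is not done, and it is not clear it can be done cleanly---there are $2^{\abs{\gftc}}$ subsets with $\abs{\gftc}\approx\ee{(1-\eps)\la}$, so the combinatorics is doubly exponential in $\la$, and your recursion has no evident contraction. The paper avoids this entirely: instead of a union bound over cuts, it invokes a structural sufficient condition for connectedness from \cite[Lemma~7.2]{PIGG}, namely that every pair of siblings in $T_t$ is either \emph{strongly linked} (an edge between their descendant sets) or \emph{weakly linked} (both linked to a common nearby subtree), plus two auxiliary conditions. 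These conditions involve only $O(\abs{\gftc}\cdot\operatorname{poly}(\la))$ local events, each failing with probability at most $\ee{-\Theta(\la)}$, so a simple first-moment union bound over sibling pairs suffices. That reduction from exponentially many cuts to linearly many sibling pairs is the idea your proposal is missing.
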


\subsection{Questions}
In \cite{PIGG} we conjectured that $\mean{\abs{G(\la)}} \sim \la^{c\la}$ in agreement with computer simulation data. Simulations on $\mean{\abs{M(\la)}}$ showed a similar behaviour to $\mean{\abs{G(\la)}}$, and the same conjecture can be made. We know that $\mean{\abs{G(\la)}}$ is an analytic function of $\la$ because of results in percolation theory \cite{analyticity}. For $\mean{\abs{M(\la)}}$ we do not even have a proof of continuity. 
Apart from obtaining more detailed results about the behaviour of $M(\la)$, it would also be interesting to modify our splitting rule in order to obtain other random graph models with temporal invariance.

\section{Convergence to a limit}\label{secAsync}
In this section we prove \Tr{limit}; throughout the section we assume the parameter $\la>0$ is fixed.
Let $(G,o)$ be a random rooted graph such that $\mean{d(o)}$ is finite. Let $(G^\circ,o)$ be the single-vertex loopless graph with the same root $o$. Run the cluster process $(G_t,o_t)$ given in \Dr{cluster}, and let $H_t$ be the subgraph of $G_t$ induced by descendants of $o$. Note that $o_t\in H_t$ and $(H_t,o_t)$ evolves according to the law of the cluster process $(G^\circ_t,o_t)$, so has the same distribution.

\begin{lemma}\label{onevertex}With probability $1$, for sufficiently large $t$ we have $(G_t,o_t)=(H_t,o_t)$.\end{lemma}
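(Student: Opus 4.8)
The statement to prove is \Lr{onevertex}: starting the cluster process from a random rooted graph $(G,o)$ with finite expected root degree, almost surely for sufficiently large $t$ the cluster $G_t$ coincides with $H_t$, the subgraph induced by the descendants of $o$. Equivalently, every vertex of $G$ other than $o$ (and all its non-descendants) eventually leaves the cluster of the root.

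The plan is to track, for each original vertex $v \neq o$ of $G$, the fate of the family of its descendants, and show that almost surely this whole family is disconnected from $o$'s family at some finite time, with the disconnection times summable enough (or: only finitely many ``reconnections'' happen) that after some random time no original non-descendant of $o$ contributes to the cluster. First I would reduce to a single $v$: it suffices to show that for each fixed $v \neq o$, almost surely there is a time after which no descendant of $v$ lies in the cluster of $o_t$; since $G$ is (almost surely) finite — or at least has finitely many vertices, which follows because $\mean{d(o)} < \infty$ forces $G$ finite in the relevant sense, and in any case one works vertex-by-vertex and takes a countable union — this gives the claim for all of $G \setminus \{\text{descendants of } o\}$ simultaneously, hence $(G_t,o_t) = (H_t,o_t)$ eventually. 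Actually the cleanest reduction is: consider the edge(s) of $G$ joining the ``$o$-side'' to the ``$v$-side'' along the path in $G$; but since $G$ need not be a tree, instead track the set $S_t$ of vertices of $G_t$ that are descendants of some original vertex $\neq o$ and lie in the root component, and argue $S_t = \emptyset$ eventually.

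The key mechanism is this: consider any single edge $e$ of $\gft$ at a given time. As time goes on, each endpoint splits at rate $1$, and at each split $e$ is passed to one of the two offspring; meanwhile new edges are born only between siblings, never destroyed except by the cluster process discarding whole components. So an edge $e$ present at time $s$ persists, but its endpoints' degrees behave like independent Yule-type populations. The core estimate I want is: if at some time the root component splits into two pieces $A$ (containing $o_t$) and $B$ (not), connected by $k$ edges, then the probability that $A$ and $B$ ever reconnect — i.e. that at the next relevant split some edge gets rerouted so as to... no: reconnection cannot happen, because the cluster process only ever removes, never re-adds. So once a vertex leaves the root component it is gone forever. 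Hence it suffices to show each original non-descendant of $o$ (equivalently, via \Dr{full}, each vertex on the ``wrong side'' of the first split of $o$, recursively) almost surely leaves in finite time. This I would establish by a ``gambler's ruin'' / branching argument: track the number $X_t$ of edges between the $o$-side and the $v$-side of the relevant cut. At each split of a boundary vertex the relevant crossing edges are each independently rerouted to stay crossing or not; the number of crossing edges is a nonnegative integer-valued process that, crucially, gets no positive contribution from new edges (new edges are sibling edges, both endpoints on the same side) — so $X_t$ is dominated by a pure-death-with-splitting process that hits $0$ almost surely in finite time. One must check the population of boundary vertices doesn't grow so fast as to keep $X_t$ from reaching $0$; here the key point is that a crossing edge, once it exists, is carried by exactly one vertex on each side, and each such edge independently has probability $1/2$ of ceasing to cross at the next split of either of its endpoints, with splits happening at rate $\geq 1$, so each crossing edge dies at rate $\geq 1$ regardless of population growth — giving $\mean{X_t} \le X_0 \ee{-t} \to 0$, and since $X_t$ is integer-valued, $X_t = 0$ eventually a.s.

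The main obstacle I anticipate is handling the fact that $G$ is a multigraph that need not be a tree, so ``the cut between $o$ and $v$'' is not canonically defined and the relevant boundary set evolves as splits occur; I would address this by fixing, at time $0$, a partition of $V(G) \setminus \{o$'s descendants$\}$ — actually by working with the genealogical tree $T$ and defining the $o$-side at time $t$ to be descendants of $o_0$, everything else the ``rest'', and simply tracking $X_t :=$ number of edges of $\gft$ between these two vertex classes. This $X_t$ starts finite (as $\mean{d(o)}<\infty$ gives finitely many edges incident to $o$'s side... but the rest could have many edges among themselves — that's fine, we only count crossing edges, and $X_0 \le$ total edges of $G$, which we may as well assume finite). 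New edges are always sibling-to-sibling, so within one class; thus $X_t$ only ever decreases, by the rerouting mechanism, and the rate-$1$-per-crossing-edge death estimate above gives $X_t \to 0$ a.s. Once $X_t = 0$, the root component contains no descendant of any original non-$o$ vertex, i.e. $G_t = H_t$, and since the root component can only shrink relative to descendants-of-$o$... one final check: we need $(G_t, o_t) = (H_t, o_t)$ to persist, which holds because once the two classes are disconnected in $\gft$, no future sibling edge can reconnect them (siblings stay in the same class) — so $X_s = 0$ for all $s \ge t$, completing the proof.
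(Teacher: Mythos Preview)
Your argument contains a genuine gap at the core step. You define $X_t$ as the number of edges of $\gft$ between the class of descendants of $o$ and the complementary class, and then assert that each crossing edge ``has probability $1/2$ of ceasing to cross at the next split of either of its endpoints''. This is false: when an endpoint $v$ of a crossing edge splits into $v_1,v_2$, the edge is rerouted to one of $v_1,v_2$, but both children lie in the \emph{same} class as $v$ (descendant-of-$o$ status is inherited). So the edge still crosses. Since edges are never deleted in the full process, and new edges are indeed intra-class, $X_t$ is not merely non-increasing but actually constant, equal to $d_G(o)$ for all $t$. Your death-rate estimate $\mean{X_t}\leq X_0\ee{-t}$ therefore has no basis, and the argument does not establish that the two classes ever become disconnected in $\gft$.

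The paper's proof addresses exactly this difficulty by tracking something finer than crossing edges. For each original edge $e$ from $o$, it follows the descendant-of-$o$ endpoint $o'_t$ and counts the number of \emph{new} edges meeting $o'_t$. This count evolves as the Markov chain $X_{k+1}\sim\operatorname{Bin}(X_k,1/2)+\Po{\la/2}$ at each split of $o'_t$, which is positive recurrent (stationary distribution $\Po{\la}$) and hence hits $0$ almost surely; combined with the fact that $o'_t\neq o_t$ eventually, this ``kills'' $e$ in the sense that any path from $o_t$ through $e$ must first traverse some other original edge. Once all finitely many original edges from $o$ are killed, they form a cut separating $H_t$ from the rest of $G_t$. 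The essential idea you are missing is that the disconnection is achieved not by the crossing edges disappearing, but by their $o$-side endpoints becoming isolated \emph{within} the $o$-side from both the root and all new edges.
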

\begin{proof}
We refer to edges of $\gft$ which were added after time $0$ as \textit{new} edges, and those which correspond (after replacements when vertices split) to edges of $G$ as \textit{old} edges. Let $e\in E(G)$ be an edge from the root, and let the corresponding edge at time $t$ meet $o'_t$, where $o'_t$ is a descendant of the root. We say that $e$ has been \textit{killed} by time $t$ if, for some $s\leq t$, we have $o'_s\neq o_s$ and no new edges meet $o'_s$. If $e$ has been killed by time $t$, then at time $s$ all paths from $o_s$ to $o'_s$ must use at least one old edge, and this property is preserved by splitting events, so the same is true for $t$. Thus, if a path from the root in $\gft$ uses any old edge, the first old edge in that path must not have been killed by time $t$, meaning that the old edges which have not been killed by time $t$ form a cut separating $H_t$ from the rest of $G_t$. It therefore suffices to show that with probability $1$ eventually every old edge has been killed.
	
For a specified edge $e$, consider the first time that the root splits and $o'_t\neq o_t$; call this $t_1$. Let $t_2,t_3,\ldots$ be the subsequent times that $o'_t$ splits, and let $X_k$ be the number of new edges meeting $o'_{t_k}$. Then $X_{k+1}\sim\operatorname{Bin}(X_k,1/2)+\Po{\la/2}$. This gives an irreducible Markov chain on $\mathbb N$ with a stationary distribution $\Po{\la}$. As a result, the chain is positive recurrent and in particular hits $0$ in finite time, killing $e$, with probability $1$. Since there were finitely many old edges, all of them are killed in finite time with probability $1$.\end{proof}

Before proceeding to the proof of \Tr{limit}, we first recall the \textit{Poisson edge model} of \cite{PIGG}. This is a long-range percolation model on the leaves of the canopy tree. We may label the complete binary trees of height $0, 1, \ldots$ in such a way that each tree is a subtree of the next, with each leaf also being a leaf of the next tree. The (binary) \textit{canopy tree} is then the union of this sequence of trees, and has an infinite sequence of leaves. The Poisson edge model is a random multigraph whose vertices are the leaves of the canopy tree, and whose edges are given by independently placing $\Po{2^{1-d(x,y)}\la}$ edges between each pair of leaves $x,y$, where $d(x,y)$ is the graph distance on the canopy tree. In \cite{PIGG} it is shown that the unique random rooted connected multigraph having finite expected root degree which is invariant under the synchronous version of the cluster process is given by the cluster of the root in the Poisson edge model. For the cluster process of \Dr{cluster}, the picture will be more complicated. Note that we may define the $T$-Poisson edge model for any binary tree $T$ in the same way: it is the random multigraph on the leaves of $T$, with $\Po{2^{1-d_T(x,y)}\la}$ edges independently between each pair of leaves $x,y$. We shall need a simple observation about the $T$-Poisson edge model.

Let $T$ be any binary tree, and fix an edge $uv$. We say that an edge of the $T$-Poisson edge model \textit{crosses} $uv$ if its endpoints are in different components of $T-uv$.
\begin{lemma}\label{edge-cross} The probability that the $T$-Poisson edge model has no edges which cross $uv$ is at least $\ee{-\la}$.\end{lemma}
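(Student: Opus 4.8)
The plan is to reduce the statement to a computation involving only the edge $uv$ and the two subtrees it separates. Removing $uv$ splits $T$ into two components, say $T_u$ (containing $u$) and $T_v$ (containing $v$); write $L_u$ and $L_v$ for their respective leaf sets. An edge of the $T$-Poisson edge model crosses $uv$ precisely when it joins some $x\in L_u$ to some $y\in L_v$, and for such a pair the tree-distance decomposes as $d_T(x,y)=d_T(x,u)+1+d_T(v,y)$. The number of crossing edges is a Poisson random variable whose mean $\mu$ is the sum of $2^{1-d_T(x,y)}\la$ over all such pairs, by independence and the superposition property of Poisson variables. Hence the probability of no crossing edge is $\ee{-\mu}$, and it suffices to show $\mu\le\la$.

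The key step is the bound $\mu\le\la$, which I would obtain by factoring the double sum. Writing $a=d_T(x,u)$ and $b=d_T(v,y)$, we have
\[
\mu=\la\sum_{x\in L_u}\sum_{y\in L_v}2^{1-(d_T(x,u)+1+d_T(v,y))}=\la\left(\sum_{x\in L_u}2^{-d_T(x,u)}\right)\left(\sum_{y\in L_v}2^{-d_T(y,v)}\right).
\]
Now I would invoke the standard fact that in a binary tree, for any fixed vertex $w$ the leaves $\ell$ of any subtree containing $w$ satisfy $\sum_\ell 2^{-d(\ell,w)}\le 1$; this is just Kraft's inequality applied to the (at most two) subtrees hanging off $w$, since at each branching the factor $2^{-1}$ exactly compensates the splitting into two children, and leaves above $w$ in the rooted-at-$w$ picture only reduce the sum further. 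Applying this to $w=u$ in $T_u$ and to $w=v$ in $T_v$ gives both bracketed factors at most $1$, whence $\mu\le\la$ and the probability of no crossing edge is at least $\ee{-\la}$, as required.

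The only mild subtlety — and the place I would be most careful — is the precise form of Kraft's inequality needed here: $u$ is an internal vertex of $T_u$, not necessarily of degree $2$ within $T_u$ (it had degree including the edge to $v$), so when we re-root $T_u$ at $u$ it has at most two subtrees below it, and the weight-$2^{-d}$ sum over the leaves of a complete-binary-type branching is exactly $1$ in the infinite case and $\le1$ otherwise. Since the trees in question are subtrees of the (finite or canopy) binary tree, each such sum is a finite or convergent sum bounded by $1$; if one wants to be fully rigorous one can prove $\sum_{\ell}2^{-d(\ell,w)}\le 1$ by induction on the structure of the tree below $w$, the inductive step combining the two children's sums each multiplied by $1/2$. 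With that in hand the argument is complete.
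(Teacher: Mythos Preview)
Your proof is correct and follows essentially the same route as the paper: decompose into the two leaf sets $L_u,L_v$, observe the number of crossing edges is Poisson with mean $\la\bigl(\sum_{x\in L_u}2^{-d_T(x,u)}\bigr)\bigl(\sum_{y\in L_v}2^{-d_T(v,y)}\bigr)$, and bound each factor by $1$. The only cosmetic difference is that the paper justifies the bound on each factor via a random-walk interpretation (the walk from $u$ that always moves away reaches leaf $x$ with probability $2^{-d_T(x,u)}$), whereas you invoke Kraft's inequality with an inductive proof; these are the same fact.
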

\begin{proof}Write $L_u$, $L_v$ for the leaves of the components containing $u$ and $v$ respectively. The number of such edges is $\Po{z\la}$ where 
\begin{align*}z&=\sum_{x\in L_u}\sum_{y\in L_v}2^{1-d_T(x,y)}\\
&=\biggl(\sum_{x\in L_u}2^{-d_T(x,u)}\biggr)\biggl(\sum_{y\in L_v}2^{-d_T(v,y)}\biggr).
\end{align*}
We must therefore check that $z\leq 1$. Consider a random walk on the component of $T-uv$ containing $u$ started at $u$ and constrained to increase the distance from $u$ at every step, stopping if it reaches a leaf. Then for $x\in L_u$ the probability this walk stops at $x$ is $2^{-d_T(x,u)}$, since there are two possible moves at each step. Thus $\sum_{x\in L_u}2^{-d_T(x,u)}\leq 1$, and the same argument applies to $L_v$, giving the result.
\end{proof}
\begin{remark}In fact provided that $T$ has countably many ends we have equality in \Lr{edge-cross}, since both walks terminate almost surely.\end{remark}
\begin{proof}[Proof of \Tr{limit}]
We will construct a random multigraph $(M(\la),o)$ with the property that $(M(\la)_t,o_t)$ has the same distribution for any $t\geq 0$. To show uniqueness, we will show that $(G^{\circ}_t,o_t)$ converges in distribution to $(M(\la),o)$, and apply \Lr{onevertex}.
	
Our construction of $(M(\la),o)$ will use the $T$-Poisson edge model, working with a random tree $T$. (This tree can be thought of as the local limit of the Yule tree at time $t$, or equivalently the ball of radius $t$ in first passage percolation on the full binary tree with $\clock$ edge costs, after re-rooting at the leaf reached by a simple forward random walk from the root.)
	
To begin with, we construct some finite random trees $T(t)$ that will form the building blocks in the construction of $T$. Given a parameter $t>0$, we define a random rooted binary tree $T(t)$ as follows. Start from a single-vertex rooted tree, with an exponential clock of rate $1$ on the root. Whenever a clock on a vertex $v$ rings, add two children of $v$, each with their own independent exponential clocks of rate $1$ (do not replace the clock on $v$; each vertex rings at most once). Continue until time $t$. Note that $T(t)$ is almost surely finite.
Next we construct an infinite random tree $T$. Start from an infinite path $P=v_0v_1\cdots$, and label its edges with an infinite sequence $s_1,s_2,\ldots$ of \iid $\clock$ random variables. For each $i>0$, sample a copy $T_i$ of $T(\sum_{j\leq i}s_j)$, denote its root by $w_i$, and join $T_i$ to $P$ with the edge $v_iw_i$. Here each $T_i$ is sampled independently.

Having constructed $T$, consider the $T$-Poisson edge model. We let $M(\la)$ be the component of $v_0$ in this random multigraph, and let $v_0$ be the root of $M(\la)$. For $n\in\mathbb N$, let $L_n$ be the leaves of the component of $T-v_nv_{n+1}$ containing $v_n$.
\begin{claim}\label{as-finite}With probability $1$, $V(M(\la))\subseteq L_n$ for $n$ sufficiently large.\end{claim}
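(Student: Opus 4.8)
The plan is to prove that, almost surely, some cut edge $v_nv_{n+1}$ of the path $P$ is left uncrossed by every edge of the $T$-Poisson edge model. Since $v_0\in L_n$, this forces $V(M(\la))\subseteq L_n$, and then $V(M(\la))\subseteq L_{n'}$ for every $n'\ge n$ because $L_n\subseteq L_{n'}$; so it suffices to show $\prob{\exists n:\ v_nv_{n+1}\text{ uncrossed}}=1$. Let $Y_n$ be the number of edges of the model crossing $v_nv_{n+1}$, and set $W_N=\#\{0\le n\le N:Y_n=0\}$. I will establish $\prob{W_N\ge 1}\to 1$ as $N\to\infty$ via the second moment method: I will show $\mean{W_N}$ grows linearly in $N$ while $\var(W_N)=O(N)$, so that Chebyshev's inequality gives $\prob{W_N=0}\le\var(W_N)/\mean{W_N}^2=O(1/N)$. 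Since $\{W_N\ge1\}$ increases with $N$, this gives the claim.

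For the first moment: conditionally on $T$, $Y_n$ is Poisson with mean $z_n\la$, where $z_n=\sum_{x\in L_n}\sum_{y}2^{1-d_T(x,y)}$, the inner sum over leaves $y\notin L_n$, is exactly the quantity estimated in \Lr{edge-cross}. Here both random walks used in the proof of that lemma terminate almost surely: the side containing $v_n$ is a finite tree, while on the other side the walk enters one of the finite subtrees $T_j$ after finitely many steps with probability $1$ (at each of $v_{n+1},v_{n+2},\dots$ it either continues along $P$ or turns into the attached subtree, each with probability $1/2$). Hence $z_n=1$, so $\prob{Y_n=0}=\ee{-\la}$ and $\mean{W_N}=(N+1)\ee{-\la}$.

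For the second moment I need, for $n<m$, the chance that $v_nv_{n+1}$ and $v_mv_{m+1}$ are both uncrossed. Conditionally on $T$, the number of edges crossing at least one of the two cuts is Poisson with mean $\la(z_n+z_m-s_{n,m})=\la(2-s_{n,m})$, where $s_{n,m}=\sum_{x\in L_n}\sum_{y}2^{1-d_T(x,y)}$, the inner sum over leaves $y\notin L_m$, counts the edges crossing \emph{both} cuts. Every such path factors as $d_T(x,y)=d_T(x,v_n)+(m-n+1)+d_T(v_{m+1},y)$, so applying the random-walk bound of \Lr{edge-cross} on each side gives $s_{n,m}\le 2^{-(m-n)}$. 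Therefore $\prob{Y_n=Y_m=0}=\ee{-2\la}\,\mean{\ee{\la s_{n,m}}}\le \ee{-2\la}\ee{\la 2^{-(m-n)}}$. Summing over $0\le n\ne m\le N$ and using $\ee{\la 2^{-d}}-1=O(2^{-d})$, the off-diagonal contribution to $\mean{W_N^2}$ is $\ee{-2\la}\bigl(N(N+1)+O(N)\bigr)$, and together with $\mean{W_N}^2=(N+1)^2\ee{-2\la}$ this yields $\var(W_N)=\mean{W_N^2}-\mean{W_N}^2=O(N)$, as required.

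The crux, and the reason one has to work at all, is that the events $\{Y_n=0\}$ are positively correlated: a single long edge crosses a whole block of consecutive cuts, so independence is unavailable and naive Borel--Cantelli arguments do not apply. What rescues the second moment method is that this correlation decays geometrically — the expected number of edges crossing two cuts at distance $d$ is at most $\la 2^{-d}$, which is precisely the random-walk estimate underlying \Lr{edge-cross} — and this is exactly what keeps $\var(W_N)$ linear in $N$ rather than quadratic, so that Chebyshev's inequality bites. The remaining ingredients — evaluating the Poisson rates above (distance bookkeeping, using that each $T_i$ is a complete binary tree, so that $\sum_{\text{leaves}}2^{-\text{depth}}=1$) and the elementary geometric summation in the variance bound — are routine.
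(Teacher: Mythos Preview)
Your proof is correct, but it takes a genuinely different route from the paper's. The paper argues by \emph{iterative exploration}: starting at $k=0$, reveal edges of the $T$-Poisson edge model one at a time; whenever a revealed edge crosses $v_kv_{k+1}$, jump $k$ forward past it. At each value of $k$ considered, the unrevealed edges are still independent Poissons, and the total rate crossing $v_kv_{k+1}$ is at most $\la$ by \Lr{edge-cross}, so with probability at least $\ee{-\la}$ no further crossing is ever found, independently of the history. This dominates a sequence of Bernoulli$(\ee{-\la})$ trials, so almost surely some $k$ succeeds. Your approach instead counts uncrossed cuts directly and applies the second moment method, using that the covariance between $\{Y_n=0\}$ and $\{Y_m=0\}$ decays like $2^{-|m-n|}$.

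Both arguments hinge on the same estimate from \Lr{edge-cross}. The paper's argument is shorter and, as a by-product, gives a geometric tail bound on the index of the first uncrossed cut; your second-moment computation is more laborious but entirely standard and avoids the need to set up a careful revealing scheme with the right conditional independence. One terminological slip: the trees $T_i$ are not \emph{complete} binary trees (leaves need not be at the same depth) but \emph{full} binary trees (every internal vertex has exactly two children); the identity $\sum_{\text{leaves}}2^{-\text{depth}}=1$ that you need is the Kraft equality for full binary trees, so the mathematics is unaffected.
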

\begin{poc}
Starting from $k=0$, iteratively reveal the number of edges of the $T$-Poisson edge model between pairs of vertices until an edge crossing $v_kv_{k+1}$ is found. If this happens, update $k$ to be the smallest value such that no edge yet revealed crosses $v_kv_{k+1}$ and continue revealing. By \Lr{edge-cross}, for each different value of $k$ considered there is a probability of at least $\ee{-\la}$ that no suitable edge is ever found, no matter what was previously revealed. Thus almost surely one of the edges $v_kv_{k+1}$ is not crossed, meaning that $V(M(\la))\subseteq L_k$.
\end{poc}
Thus $M(\la)$ almost surely contains vertices from finitely many of the subtrees $T_i$. In particular, since each $T_i$ is almost surely finite, so is $M(\la)$.
\begin{claim}\label{claim-for-fig}$(M(\la)_t,o_t)$ has the same distribution as $(M(\la),o)=(M(\la)_0,o_0)$.\end{claim}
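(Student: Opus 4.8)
The plan is to follow the cluster process at the level of the genealogical tree, together with the Poisson edges carried on its leaves, and to show that it acts on our construction of $M(\la)$ simply by replacing the underlying tree $T$ by an equidistributed re-rooted copy.

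The first ingredient is that the Poisson edge model is preserved by splitting. If $G'$ arises from a $T$-Poisson edge model by performing the split of \Dr{full} at a leaf $x$, with offspring $x_1,x_2$, then $G'$ is exactly the $T'$-Poisson edge model, where $T'$ is $T$ with $x_1,x_2$ attached as children of $x$: an old bundle of $\Po{2^{1-d}\la}$ edges meeting $x$ splits into a $\operatorname{Bin}(\Po{2^{1-d}\la},1/2)=\Po{2^{-d}\la}$ bundle meeting $x_1$ while $d_{T'}(x_1,\cdot)=d_T(x,\cdot)+1$, and the fresh $\Po{\la/2}=\Po{2^{1-2}\la}$ bundle sits between $x_1$ and $x_2$, which are at $T'$-distance $2$; this is the arithmetic already met in the Markov chain $X_{k+1}\sim\operatorname{Bin}(X_k,1/2)+\Po{\la/2}$ in the proof of \Lr{onevertex}. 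Iterating over all splits in $[0,t]$, the full process run from the $T$-Poisson edge model is, at time $t$, the $T^+$-Poisson edge model, where $T^+$ is $T$ with an independent copy of $T(t)$ grafted below every leaf, and the forward-random-walk root has reached the leaf $o_t$ of the copy $S$ of $T(t)$ grown below $v_0$. Since a split never merges components, descendants of vertices outside $M(\la)$ can never join the component of $o_t$, so $(M(\la)_t,o_t)$ is exactly the component of $o_t$ in the $T^+$-Poisson edge model.

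It remains to prove the key distributional statement: $(T^+,o_t)$ and $(T,v_0)$ induce the same law on the rooted component of their Poisson edge models. It suffices, and is cleanest, to show that $(T^+,o_t)$ and $(T,v_0)$ are equidistributed as rooted trees decorated with the spine-edge lengths that govern the sizes of the subtrees hanging off the spine. Two facts drive this. First, the memorylessness of the Yule process: grafting independent copies of $T(t)$ below all leaves of $T(a)$ yields $T(a+t)$, so each side subtree $T_i\sim T(\sigma_i)$ becomes a copy of $T(\sigma_i+t)$, and the subtrees dangling from the new stretch of spine inside $S$ are Yule trees of the appropriate durations. Second, the reproduction times along any fixed lineage of a Yule process form a rate-$1$ Poisson process, so after re-rooting $S$ at its forward-random-walk leaf $o_t$ the new spine edges inside $S$ carry i.i.d.\ $\clock$ lengths, and the attached subtree parameters come out exactly as in the construction of $T$; splicing this onto the old spine $v_0v_1\cdots$ then recovers the law of $(T,v_0)$. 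The most transparent way to manage the bookkeeping is via the observation recorded after the construction of $T$: $(T,v_0)$ is the distributional limit, as $s\to\infty$, of the time-$s$ Yule tree re-rooted at its forward-random-walk leaf, and the evolution above simply advances this finite object from time $s$ to time $s+t$; letting $s\to\infty$ gives the claim. Finally, since the edge set of $M(\la)$ is a fixed function (the Poisson edge model) of the tree, and by Claim~\ref{as-finite} the component of the root almost surely depends on only finitely much of $T$ — with the per-spine-edge tail bound $\ee{-\la}$ supplied by \Lr{edge-cross} — equality in law for the decorated rooted trees transfers to the rooted components, and hence $(M(\la)_t,o_t)$ has the same law as $(M(\la)_0,o_0)$.

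The main obstacle is this last distributional step, and within it the re-rooting: the new spine is longer than the old one by a $\Po{t}$-distributed number of edges lying inside the copy of $T(t)$ grown below $v_0$, and one must check that the joint law of these new spine-edge lengths together with their side-subtree durations meshes precisely with the construction of $T$. The ``first'' spine edge is mildly special here — it records an elapsed age rather than a full inter-reproduction gap — and tracking how it transforms under the re-rooting is the fiddly point; realising $T$ as a limit of finite Yule objects and intertwining the cluster process with the time-shift $s\mapsto s+t$ is what makes it manageable.
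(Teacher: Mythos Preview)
Your approach is correct and rests on the same idea as the paper's: the evolution acts on the spine of $T$ by prepending the root-lineage splits in $[0,t]$, and the resulting sequence of spine labels has the same law as the original. The paper, however, executes this more directly. Rather than working with the whole tree $T^+$, it abstracts the construction to a map $P\mapsto G(P,\la)$ from an edge-labelled half-line to a random multigraph, so that it suffices to show $P_t\overset{d}{=}P$. The ``fiddly point'' you flag --- that the first new spine edge is an age, and that the old $s_1$ must absorb the residual time $t_k$ --- is then dispatched in one line: the partial sums of the labels of $P$ are a rate-$1$ Poisson process on $[0,\infty)$, and those of $P_t$ are the same process on $[-t,\infty)$, which is identically distributed after a shift. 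This replaces both your limiting argument (which would need the local-limit characterisation of $T$ to be proved, not merely asserted) and your slightly loose claim that the new spine edges inside $S$ already carry i.i.d.\ $\clock$ lengths (they do not on their own; only after splicing with the modified $s_1$ does the full sequence become i.i.d.). Your framing via $T^+$ has the merit of making the compatibility of the Poisson edge model with splitting explicit, but the path reduction is what makes the invariance a one-line Poisson-shift.
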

\begin{poc}Recall that the construction of $M(\la)$ was based on the randomly edge-labelled path $P$. Let us denote by $G(P,\la)$ the random graph constructed from any path $P$ with edges bearing positive real labels by following the above procedure. To compare $M(\la)$ with $M(\la)_t$, we will express the latter as $G(P_t,\la)$ for an appropriate randomly labelled path $P_t$: consider a Poisson point process $R=(-t_1, -t_2, \ldots, -t_k), k\geq 0$ on the interval $[-t,0]$ (where we assume that $t_i\geq t_{i+1}$) governed by Lebesgue measure and with duration $1$. We obtain $P_t$ from $P$ as follows. We change the label $s_1$ of the first edge of $P$ into $s_1+t_k$ if $k\geq 1$, or into $s_1+t$ if $k=0$. Moreover, we append $k$ edges at the start of $P$, and label them as follows. The first edge is labelled $t-t_1$, and for $i=2,\ldots, k$, the $i$th edge is labelled $t_{i-1}-t_i$. It is straightforward to check that $G(P_t,\la)$ is identically distributed with $(M(\la)_t,o_t)$ by identifying the times at which the root is split with the reversal $t_k, \ldots, t_2, t_1$ of $R$, using the fact that $t_{i-1}-t_i$  has distribution $\clock$, and so do $t_k$ and $t-t_1$.

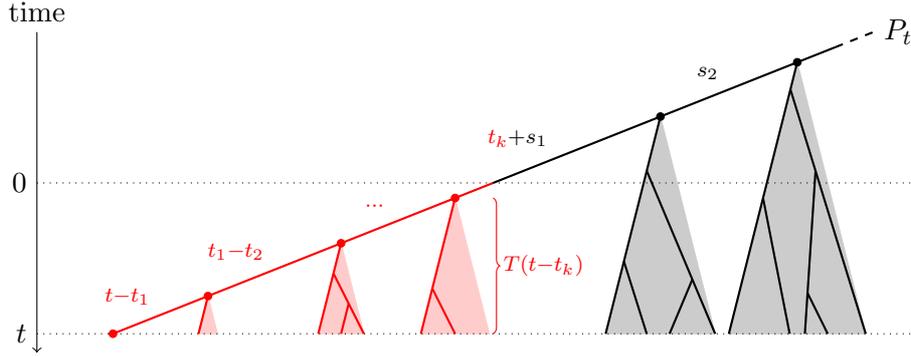
\begin{figure}[ht]
\centering
\begin{tikzpicture}
\node[red,anchor=south east] at (.625,.25) {$\scriptstyle t-t_1$};
\draw[fill,red!20!white] (1.25,.5) -- (1.125,0) -- (1.375,0) -- cycle;
\draw[fill,red] (1.25,.5) circle (0.05);
\draw[thick,red] (1.25,.5) -- (1.125,0);

\node[red,anchor=south east] at (2.125,.85) {$\scriptstyle t_1-t_2$};
\draw[fill,red!20!white] (3,1.2) -- (2.7,0) -- (3.3,0) -- cycle;
\draw[fill,red] (3,1.2) circle (0.05);
\draw[thick,red] (3,1.2) -- (2.7,0);
\draw[thick,red] (2.9,.8) -- (3.3,0);
\draw[thick,red] (3.1,.4) -- (3,0);

\node[red,anchor=south east] at (3.75,1.5) {$\scriptstyle \cdots$};
\draw[fill,red!20!white] (4.5,1.8) -- (4.95,0) -- (4.05,0) -- cycle;
\draw[fill,red] (4.5,1.8) circle (0.05);
\draw[red,decorate,decoration={brace}] (5,1.8) -- (5,0) node[midway,right]{$\scriptstyle T(t-t_k)$};
\draw[thick,red] (4.5,1.8) -- (4.05,0);
\draw[thick,red] (4.2,.6) -- (4.5,0);

\node[anchor=south east] at (5.85,2.34) {$\scriptstyle \color{red}{t_k}\color{black}{+s_1}$};
\draw[fill,black!20!white] (7.2,2.88) -- (6.48,0) -- (7.92,0) -- cycle;
\draw[fill] (7.2,2.88) circle (0.05);
\draw[thick] (7.2,2.88) -- (6.48,0);
\draw[thick] (7.02,2.16) -- (7.92,0);
\draw[thick] (7.62,.72) -- (7.32,0);
\draw[thick] (6.72,.96) -- (7.02,0);

\node[anchor=south east] at (8.1,3.24) {$\scriptstyle s_2$};
\draw[fill,black!20!white] (9,3.6) -- (9.9,0) -- (8.1,0) -- cycle;
\draw[fill] (9,3.6) circle (0.05);
\draw[thick] (9,3.6) -- (8.1,0);
\draw[thick] (8.91,3.24) -- (9.9,0);
\draw[thick] (8.55,1.8) -- (8.9,0);
\draw[thick] (9.24,2.16) -- (9.1,0);
\draw[thick] (9.135,.54) -- (9.4,0);

\draw[dotted] (-1,0) node[anchor=east]{$t$} -- (10.5,0);
\draw[->]  (-1,4) node[anchor=south]{time} -- (-1,-.25);
\draw[thick,red] (0,0) -- (5,2);
\draw[thick] (5,2) -- (9.5,3.8);
\draw[thick,dashed] (9.5,3.8) -- (10,4) node[anchor=west]{$P_t$};
\draw[dotted] (-1,2) node[anchor=east]{$0$} -- (10.5,2);
\draw[fill,red] (0,0) circle (0.05);
\end{tikzpicture}
\caption{Construction of $M(\la)_t$ via $P_t$ (proof of Claim \ref{claim-for-fig}).}
\end{figure}
To finish the proof that $(M(\la)_t,o_t) = G(P_t,\la)$ has the same distribution as $(M(\la),o) = G(P,\la)$, it suffices to prove that $P_t$ has the same distribution as $P$. To prove this, note that we can sample the labels $s_1, s_2, \ldots$ of $P$ as a Poisson point process on the real axis $[0,\infty)$ governed by Lebesgue measure and with duration $1$. Similarly, we can sample the labels of $P_t$ as the gaps of a Poisson point process on $[-t,\infty)$. But these two Poisson point processes are identically distributed once we shift by $t$, as required.\end{poc}
Next, we show that  $G_t^\circ$ converges in distribution to $M(\la)$.
To begin with, we can obtain $G_t^\circ$ by a construction similar to that of $M(\la)$, by keeping track of the genealogical tree $T_t$ of the vertices of $G_t^\circ$: the vertex set of $T$ comprises all vertices that appeared throughout the process $G_s^\circ, 0\leq s \leq t$, and if a vertex $v$ was replaced with $v_1,v_2$ at some time $s\leq t$, we join $v$ with an edge to each of $v_1,v_2$. Note that the vertex set of $G_t^\circ$ is contained in the set of leaves of $T_t$. To sample the edges of $G_t^\circ$, we put $\Po{2^{1-d_{T_t}(x,y)}\la}$ parallel edges independently between any two leaves $x,y$ of $T_t$, and identify $G_t^\circ$ with the component of $o$ in the resulting multigraph.

The times $t_1,\ldots, t_k$ when the root of $G_t^\circ$ splits are, by definition, given by a Poisson point process on $[0,t]$ governed by Lebesgue measure on that interval. Consequently, the ``reversed'' sequence of times $t-t_k, \ldots, t-t_1$ has the same distribution as $t_1,\ldots, t_k$. Using this fact, we may equivalently construct $G_t^\circ$ using $t-t_k, \ldots, t-t_1$ as the splitting times of the root, while leaving the rest of the construction unchanged. This realisation of $G_t^\circ$ coincides, by definition, with the following construction. Start with a random path $P_t$ with $k$ edges $e_1,\ldots, e_k$, where as above $k$ is the number of splittings of $o$ in the time interval $[0,t]$,  labelling $e_i$ with the time gap $s_i= t_{k+1-i}- t_{k-i}$ if $i=2,\ldots,k$ or $s_i=t-t_k$ if $i=1$. Attach to the endvertex $v_i$ of $e_i$ an independent copy of $T(\sum_{j\leq i}s_j)$ as above, and finally define a random graph on the leaves of the resulting tree by taking the component of the root in its Poisson edge model.
	
Appropriately coupled, $M(\la)$ and $G_t^\circ$ therefore give the same result so long as $M(\la)$ does not reach the end of the finite path $P_t$ in the above construction. Write $E_n$ for the event that $M(\la)$ does not extend past $v_n$. Given $\eps>0$, choose $n$ such that $\prob{E_n}<\eps/2$ (which is possible by Claim \ref{as-finite}) and $t$ such that $\prob{\Po{t}<n}<\eps/2$. 

For any set of isomorphism classes of rooted connected graphs $\mathcal S$, we have 
\begin{align*}\prob{G_t^\circ\in\mathcal S}&\leq\prob{M(\la)\in\mathcal S\wedge E_n\wedge (s_1+\cdots+s_n<t)}+\prob{\cmp{E_n}}+\prob{s_1+\cdots+s_n\geq t}\\
&<\prob{M(\la)\in\mathcal S}+\eps,
\end{align*}
and
\begin{align*}\prob{G_t^\circ\in\mathcal S}&\geq\prob{M(\la)\in\mathcal S\wedge E_n\wedge (s_1+\cdots+s_n<t)}\\
&\geq\prob{M(\la)\in\mathcal S}-\prob{\cmp{E_n}}-\prob{s_1+\cdots+s_n\geq t}\\
&>\prob{M(\la)\in\mathcal S}-\eps.
\end{align*}
Thus $G_t^\circ$ converges in distribution to $M(\la)$ as $t\to \infty$. The uniqueness of $M(\la)$ now follows from \Lr{onevertex}, since if $G$ is a random graph with $G_t$ identically distributed for every $t$, that lemma implies that the distribution of $G$ is the limit of the distribution of  $G_t^\circ$.
\end{proof}

The random multigraph $M(\la)$ described above differs from the corresponding multigraph $G(\la)$ for the synchronous case studied in \cite{PIGG}, that is, the component of the root in the original Poisson edge model on the canopy tree. To see this, it is sufficient to consider the probability, conditional on $d(o)=2$, of a double edge from the root.
For $M(\la)$ this is $\sum_{x\neq o}4^{1-d(o,x)}$, where the sum is taken over all other leaves of the random tree $T$. 
Note that the probability that $w_1$ is a leaf is $\prob{\tau(w_1)<s_1}$, where $\tau(w_1)$ is the length of $w_1$'s clock. 
Since $\tau(w_1)$ and $s_1$ are i.i.d., we have $\prob{w_1\text{ a leaf}}=1/2$; clearly $w_i$ is less likely to be a leaf than $w_1$ if $i>1$, so each $w_i$ is a leaf with probability at most $1/2$. 
For each $i\geq 1$, the probability of a double edge to a descendent of $w_i$ is $4^{-i}$ if $w_i$ is a leaf, and at most $4^{-i-1}$ otherwise 
(being maximised when both its offspring are leaves). So the probability of a double edge is at most 
$\sum_{i\geq 1}(4^{-i}+4^{-i-1})/2=1/4$.
For the canopy tree version $G(\la)$, the probability of a double edge is $\sum_{h\geq 1}2^{h-1}4^{1-2h}=2/7$, and so $M(\la)$ has a strictly smaller double-edge probability.

\section{Finite expected size}
In this section, we consider the expected size $\mean{\abs{M(\la)}}$. While the expected size of $G(\la)$ is finite for every $\la>0$ \cite{PIGG}, it is not immediately clear whether the same is true of $M(\la)$. Since $M(\la)$ arises from the $T$-Poisson edge model on a random tree $T$, and we know that the expected cluster size is finite for the Poisson edge model on the canopy tree, and that the cluster size of the Poisson edge model on any binary tree is almost surely finite (Claim \ref{as-finite}), one might hope to prove a universal bound (depending on $\la$) on the expected cluster size for any binary tree, whence the desired result would follow by averaging. However, no such bound exists; indeed, there are binary trees on which the expected cluster size of the Poisson edge model is infinite for sufficiently large $\la$. One example may be obtained by replacing each edge of the canopy tree by a two-edge path with a pendant leaf attached to the new vertex. If $v$ was a leaf of the canopy tree at distance $2k$ from $o$, then the new tree contains a sequence of $2k+2$ leaves, starting at $o$ and ending at $v$, such that each consecutive pair is at distance $4$. Each of these pairs is adjacent in the Poisson edge model on this tree with probability $1-\ee{-\la/8}$, and so every such $v$ is in the component of $o$ with probability at least $(1-\ee{-\la/8})^{2k+1}$. Provided $\la\geq 8\log (2+\sqrt 2)$, it follows that the expected size of this component is infinite.

Of course, the initial sections of such a tree are not typical Yule trees, and so this example does not rule out the possibility of exploiting the large-scale structure of the tree $T$ constructed in the previous section. However, we will find it easier to use a more local approach: rather than showing that an initial section of $T$ is typically well-behaved everywhere, we work directly with Definition \ref{cluster} and explore the component of the root in $\gft$. This means that we only need $T$, which corresponds to the splitting events, to behave well in such parts as we encounter during this exploration. In the remainder of the section, we prove \Tr{finite}.

\subsection{Outline of proof}
Fix $\la>0$. Note that since $\cmot$ converges in distribution to $M(\la)$ and both $\cmot$ and $M(\la)$ are almost surely finite we have $\mean{\abs{\cmot}}\to\mean{\abs{M(\la)}}$ as $t\to\infty$. Our basic strategy is to prove a bound $f(t)$ on $\mean{\abs{\cmot}}$ which changes only slowly with $t$, and has a finite limit. Recall that $\cmot$ is the component of the root in $\gftc$. First we bound the size of $\cmot$ at time $t+\eps$.

\begin{lemma}\label{deriv}Fix times $t\geq 0$ and $\eps>0$, and let $X_\eps=\abs{\gftc[\eps]}$ be the total number of vertices in the full process at time $\eps$. Then we have
\[\mean{\abs{\cmot[t+\eps]}}<(1-\eps)\mean{\abs{\cmot}}+\eps\mean{\abs{\cmot[t+\eps]}\mid X_\eps=2}+4\eps^2\ee{t+\eps}.\]
\end{lemma}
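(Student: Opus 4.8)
The plan is to condition on the state of the full process at time $\eps$, and in particular on its population $X_\eps$, and then to decompose $\mean{\abs{\cmot[t+\eps]}}$ according to whether $X_\eps$ equals $1$, equals $2$, or is at least $3$. By Remark~\ref{geo-yule}, $X_\eps$ is geometric with mean $\ee{\eps}$, so $\prob{X_\eps=1}=\ee{-\eps}$, $\prob{X_\eps=2}=\ee{-\eps}(1-\ee{-\eps})$ and $\prob{X_\eps\ge3}=(1-\ee{-\eps})^2$. I will show that the $\{X_\eps=1\}$ contribution reproduces $(1-\eps)\mean{\abs{\cmot}}$ up to a quadratic error, the $\{X_\eps=2\}$ contribution is exactly the middle term because $\prob{X_\eps=2}\le\eps$, and the $\{X_\eps\ge3\}$ contribution is genuinely of order $\eps^2$; summing the three bounds will give the claim with a constant strictly smaller than $4$.

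On the event $\{X_\eps=1\}$ the root has not split by time $\eps$, so by memorylessness of the exponential clocks the process restarted at time $\eps$ has the same law as the process from time $0$; hence $\mean{\abs{\cmot[t+\eps]}\mid X_\eps=1}=\mean{\abs{\cmot[t]}}=\mean{\abs{\cmot}}$. Writing $\ee{-\eps}=(1-\eps)+(\ee{-\eps}-1+\eps)$, using $\ee{-\eps}-1+\eps\le\eps^2/2$, and bounding $\mean{\abs{\cmot}}\le\mean{\abs{\gftc[t]}}=\ee{t}\le\ee{t+\eps}$ (the cluster is contained in the full graph, whose population has mean $\ee{t}$ by Remark~\ref{geo-yule}), this contribution is at most $(1-\eps)\mean{\abs{\cmot}}+\tfrac{\eps^2}{2}\ee{t+\eps}$. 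For the event $\{X_\eps=2\}$ I simply use $\prob{X_\eps=2}=\ee{-\eps}(1-\ee{-\eps})\le 1-\ee{-\eps}\le\eps$, so that this contribution is at most $\eps\,\mean{\abs{\cmot[t+\eps]}\mid X_\eps=2}$.

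The remaining case $\{X_\eps\ge3\}$ is where the quantitative content lies. Here I bound $\abs{\cmot[t+\eps]}\le\abs{\gftc[t+\eps]}$ and use the branching property of the Yule process: conditionally on the configuration at time $\eps$, each of the $X_\eps$ vertices present then independently generates a rate-$1$ Yule process, so $\mean{\abs{\gftc[t+\eps]}\mid X_\eps}=X_\eps\ee{t}$. Therefore the $\{X_\eps\ge3\}$ contribution is at most $\ee{t}\,\prob{X_\eps\ge3}\,\mean{X_\eps\mid X_\eps\ge3}$; since $\prob{X_\eps\ge3}=(1-\ee{-\eps})^2\le\eps^2$ and (a direct computation with the geometric distribution) $\mean{X_\eps\mid X_\eps\ge3}=2+\ee{\eps}$, this is at most $\eps^2\ee{t}(2+\ee{\eps})=\eps^2(2\ee{t}+\ee{t+\eps})\le 3\eps^2\ee{t+\eps}$. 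Adding the three bounds gives the asserted inequality with $\tfrac72\eps^2\ee{t+\eps}$, hence a fortiori with $4\eps^2\ee{t+\eps}$ and strict inequality. I do not expect any real obstacle beyond this bookkeeping: the only substantive ingredients are the memorylessness argument identifying the $\{X_\eps=1\}$ contribution with $\mean{\abs{\cmot}}$, and the Yule branching identity $\mean{\abs{\gftc[t+\eps]}\mid X_\eps}=X_\eps\ee{t}$, which is precisely what forces the $\{X_\eps\ge3\}$ term to be quadratically small.
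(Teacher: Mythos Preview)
Your proof is correct and follows essentially the same approach as the paper: both condition on $X_\eps\in\{1,2,\ge 3\}$, use memorylessness for the first case, the bound $\prob{X_\eps=2}\le\eps$ for the second, and dominate the cluster by the full Yule population for the third. The only cosmetic difference is in the $\{X_\eps\ge3\}$ case: the paper conditions on the time $\eta_2$ of the second split and uses $3\ee{t+\eps-\eta_2}<3\ee{t+\eps}$, whereas you branch at time $\eps$ and use the memoryless identity $\mean{X_\eps\mid X_\eps\ge3}=2+\ee{\eps}$, arriving at the same $3\eps^2\ee{t+\eps}$ bound.
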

\begin{proof}
Conditioning on the value of $X_\eps$, we have
\begin{align*}\mean{\abs{\cmot[t+\eps]}}&=\prob{X_\eps=1}\mean{\abs{\cmot[t+\eps]}\mid X_\eps=1}+\prob{X_\eps=2}\mean{\abs{\cmot[t+\eps]}\mid X_\eps=2}\\
&\phantom{=}+\prob{X_\eps>2}\mean{\abs{\cmot[t+\eps]}\mid X_\eps>2}.\end{align*}
Note that, conditioned on $X_\eps=1$, $\cmot[t+\eps]$ is just the result of letting the single vertex at time $\eps$ evolve for an additional time $t$, and $\prob{X_\eps=1}=\ee{-\eps}<1-\eps+\eps^2$, so
\begin{align*}\prob{X_\eps=1}\mean{\abs{\cmot[t+\eps]}\mid X_\eps=1}&<(1-\eps+\eps^2)\mean{\abs{\cmot}}\\
&<(1-\eps)\mean{\abs{\cmot}}+\eps^2\ee{t+\eps}.\end{align*}
Also, $\prob{X_\eps=2}<\eps$, which gives the required second term. 

To deal with the third term, recall from Remark \ref{geo-yule} that $X_\eps\sim\operatorname{Geo}(\ee{-\eps})$ and so $\prob{X_{\eps}>2}=(1-\ee{-\eps})^2<\eps^2$. Now suppose that $X_\eps>2$. This means that there is some random time $\eta_2<\eps$ at which the second splitting event occurs. Nothing that happens after $\eta_2$ can affect the event $X_\eps>2$, and so we may condition on $\eta_2$. At time $\eta_2$ there are three vertices, which may or may not be connected by edges. Certainly $\abs{\cmot[t+\eps]}$ is dominated by $\abs{\gftc[t+\eps]}$, which, conditioned on $\eta_2$, has expectation $3\ee{t+\eps-\eta_2}<3\ee{t+\eps}$. Thus the final term is less than $3\eps^2\ee{t+\eps}$, as required.
\end{proof}
Conditioned on $X_\eps=2$, $\gftc[t+\eps]$ is distributed as two independent copies of the full process run for time $t$ with some edges between them, rooted at the root of the first copy. We will show that the probability of some of these edges touching the component of the root in the first copy is exponentially small. If this does happen, we argue that the expected number of edges between the two copies is not much larger than its unconditional expectation (\ie $\la/2$), and that consequently we connect together (on average) not too many components. The main issue with this is that conditioning on this unlikely event might change the expected size of a component significantly, so we must control this. If we can do this, we will have shown that
\begin{equation}\label{expbound}\mean{\abs{\cmot[t+\eps]}\mid X_\eps=2}\leq(1+h(t))\mean{\abs{\cmot}},\end{equation}
where $h(t)$ is some function that decays exponentially in $t$.
It will follow, from \eqref{expbound} and Lemma \ref{deriv}, that for any fixed $t\geq 0$ we have,
\[\limsup_{\eps\to 0+}\frac{\mean{\abs{\cmot[t+\eps]}}-\mean{\abs{\cmot}}}{\eps}\leq h(t)\mean{\abs{\cmot}},\]
and so if $f:[0,\infty)\to[0,\infty)$ is a function satisfying $f(0)=1$ and $f'(t)=h(t)f(t)$, then $f(t)\geq\mean{\abs{\cmot}}$ for each $t$.
Now for this $f$ we have $\frac{\mathrm{d}}{\mathrm{d}t}\log f(t)=h(t)$ and so 
\[\lim_{t\to\infty}f(t)=\exp\int_0^\infty h(s)\mathrm{d}s<\infty.\]

Write $\dmot$ for the result of running the cluster process for time $t$ starting from two vertices with $N\sim\Po{\la/2}$ number of edges between. We consider the descendants of the two original vertices in the corresponding full process $\dmotf$ as two independent copies of $\gftc$, with the ``left'' copy being descendants of the original root, and say that the $N$ edges between the two copies are \emph{old}, and others are \emph{new}. Note that the component of the root in the subgraph induced by the left copy is distributed as $\cmot$. We follow what happens to the left-endpoints of all old edges, and to the root. 
Recall that an old edge is \emph{killed} by a splitting event if after that event its left-endpoint is not the root, and meets no new edges. Consider the following four events, for a fixed time $t$ and $0<\alpha<1$; note that some of these events may depend on what happens after time $t$.
\begin{enumerate}[A:]
\item the left-endpoint of some old edge splits less than $\alpha t$ times by time $t$.
\item after the left-endpoint of some old edge splits $\alpha t/3$ times, it is either the root or the left-endpoint of more than one old edge.
\item B does not occur, but some new edge meets the left-endpoint of some old edge for the entire period between the $(\alpha t/3)$th and $(2\alpha t/3)$th splits of the latter.
\item B and C do not occur, but some old edge is not killed between its $(2\alpha t/3)$th and $\alpha t$th splits.
\end{enumerate}

Writing $\mathbb I_A$, etc., for the indicator functions of these events, we have
\begin{align}\mean{\abs\dmot}&\leq\mean{\abs\dmot(\mathbb I_A+\mathbb I_B+\mathbb I_C+\mathbb I_D+\mathbb I_{\cmp{(A\cup B\cup C\cup D)}})}\nonumber\\
&\leq\sum_{E\in\{A,B,C,D\}}\mean{\abs\dmot\mid E}\prob{E}+\mean{\abs\dmot\mid \cmp{(A\cup B\cup C\cup D)}}.\label{combineABCD}\end{align}

\subsection{Dealing with event A}
Note that the left-endpoint of a given edge splits $\Po{t}$ times in time $t$, and
\begin{equation*}
\prob{\Po{t}\leq\floor{\alpha t}}\leq(\floor{\alpha t}+1)\frac{\ee{-t}t^{\floor{\alpha t}}}{\floor{\alpha t}!}=O(\sqrt t)(\ee{\alpha-1}\alpha^{-\alpha})^t.
\end{equation*}
Since $\lim_{\alpha\to0+}\alpha^\alpha=1$, we may choose $\alpha>0$ such that
\begin{equation}\prob{A}\leq\frac{\la}{2}\prob{\Po{t}\leq\floor{\alpha t}}=O\bigl(\ee{(\ee{-\la}/2-1)t}\bigr).\label{A-prob}\end{equation}

We next define a variant of the full process: the \textit{singleton-free process} $S_t$ starts from a single vertex with $\Po{\la/2}$ tokens. It proceeds as the full process with tokens distributed randomly between the offspring when a vertex splits, but with the exception that any vertex which is isolated and has no tokens is immediately discarded. 

First we will show that $\mean{\abs{S_t}}$ is bounded by the expected size of a Yule process of rate $r=r(\la)<1$. The intuition here is that each splitting event has at least a constant probability of producing an isolated vertex, and we can just ignore these events, resulting in a thinning of the rate by a constant factor. However, we need to be slightly careful to check that the lower bound on the probability of creating an isolated vertex still holds even conditioned on the splitting vertex not having been isolated at any point in its history. We will need the following lemma, which will be used again for the other events.
\begin{lemma}\label{poisson+}If $X\sim\Po m$ and $Y\sim\operatorname{Bin}(X,p)$ for some $p\in(0,1]$, then $X\mid (X\geq k)$ is stochastically dominated by $k+\Po m$.\end{lemma}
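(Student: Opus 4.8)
The plan is to recognise that the asserted stochastic domination is equivalent to a submultiplicativity (``new better than used'') property of the upper tail of a Poisson law, and then to prove that property via the standard coupling of a Poisson count with the arrival times of a Poisson process.

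First I would unwind the definition of $\lest$. Since $k+\Po m\geq k$ surely, the inequality $\prob{X\geq\ell\mid X\geq k}\leq\prob{k+\Po m\geq\ell}$ holds trivially for $\ell\leq k$, so it suffices to treat $\ell=k+j$ with $j\geq 1$, where it becomes $\prob{X\geq k+j}\leq\prob{X\geq k}\prob{X\geq j}$ (and $j=0$ is an identity). Thus the whole lemma reduces to this one tail inequality for $\Po m$.

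To establish it I would use that $\prob{X\geq n}=\prob{S_n\leq m}$, where $S_n=E_1+\cdots+E_n$ with the $E_i$ \iid $\clock$; this is the dual description of a Poisson count as the number of events, by time $m$, of a rate-$1$ Poisson process (cf.\ Remark~\ref{geo-yule}). Since $S_k\geq 0$ and $S_{k+j}-S_k\geq 0$, the event $\{S_{k+j}\leq m\}$ is contained in $\{S_k\leq m\}\cap\{S_{k+j}-S_k\leq m\}$, and these two events are independent (being functions of disjoint blocks of the $E_i$) with probabilities $\prob{S_k\leq m}=\prob{X\geq k}$ and $\prob{S_{k+j}-S_k\leq m}=\prob{S_j\leq m}=\prob{X\geq j}$. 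Hence $\prob{X\geq k+j}\leq\prob{X\geq k}\prob{X\geq j}$, completing the argument. (One could instead induct on $k$ using log-concavity/the increasing hazard rate of the Poisson mass function, but the waiting-time representation is cleanest.)

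Finally, a word on the hypotheses on $Y$ and $p$, which are not used by the conclusion as written: since $Y\leq X$ pointwise, the bound just proved also dominates the law of $Y$ conditioned on $\{X\geq k\}$; and if instead one wants to condition on $\{Y\geq k\}$, Poisson thinning splits $X=Y+(X-Y)$ into independent $\Po{mp}$ and $\Po{m(1-p)}$ parts, conditioning affects only the first, and the core inequality gives $X\mid(Y\geq k)\lest k+\Po{mp}+\Po{m(1-p)}=k+\Po m$. I do not anticipate any genuine obstacle here: the only point that needs care is choosing the Poisson-process representation rather than manipulating the Poisson tail sums directly.
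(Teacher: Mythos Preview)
Your argument is correct. Both you and the paper exploit the Poisson-process representation, but the mechanisms differ: the paper builds an explicit coupling by rejection sampling---view independent copies of $\Po m$ as point counts on successive unit intervals of a rate-$1$ Poisson process, wait for the first interval containing at least $k$ points, and observe that its count is dominated by $k$ plus the count in one further unit of time---whereas you reduce the stochastic domination to the submultiplicative tail inequality $\prob{X\geq k+j}\leq\prob{X\geq k}\prob{X\geq j}$ and then verify this cleanly via the waiting-time duality $\prob{X\geq n}=\prob{S_n\leq m}$ with $S_n$ a sum of $n$ \iid exponentials. Your route is slightly more streamlined; the paper's gives a concrete coupling, which some readers may find more vivid.

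Your final paragraph is also on point. As printed, the conditioning event $\{X\geq k\}$ does not involve $Y$ or $p$; the paper's own proof (splitting into $p=1$ and $p<1$) and the subsequent applications (e.g.\ Corollary~\ref{sub-poisson}, which conditions on $\{Y'\geq 1\}$) make clear that the intended statement conditions on $\{Y\geq k\}$. Your Poisson-thinning reduction---write $X=Y+(X-Y)$ with the summands independent $\Po{pm}$ and $\Po{(1-p)m}$, apply the core inequality to the first summand, and add the second back---is precisely how the paper handles this case.
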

\begin{proof}We handle the case $p=1$; the case $p<1$ follows by noting that $Y\sim\Po{pm}$ and $X-Y$ are independent. We may sample $X\mid (X\geq k)$ by repeatedly sampling $X$, keeping the first value which is at least $k$. Since we can take the $r$th sample of $X$ as the number of points occurring in the interval $[r-1,r]$ in a Poisson process of intensity $m$, this is the same as letting the Poisson process run until the first time we have seen $k$ points since the last integer, then continuing until the next integer. 
This is clearly dominated by letting the process run to the first time we have seen $k$ points since the last integer, then continuing for time $1$, which gives the required distribution.\end{proof}
If we only have the weaker condition that $X\lest\Po{m}$ we cannot get any bound on $\mean{X\mid Y\geq 1}$, but the following bounds are sufficient for our purpose.
\begin{corollary}\label{sub-poisson}Suppose $X\lest\Po m$ is nonnegative, and $Y\sim\operatorname{Bin}(X,p)$ for some $p\in(0,1]$. Then $\prob{Y\geq 1}\leq pm$ and $\prob{Y\geq 1}\mean{X\mid (Y\geq 1)}\leq pm(m+1)$.
\end{corollary}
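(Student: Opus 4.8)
The plan is to derive both bounds from Lemma~\ref{poisson+}, after a coupling step that reduces the second bound to the case $X\sim\Po m$. The first inequality is immediate: since $X\lest\Po m$ we have $\prob{Y\geq 1}\leq\mean Y=p\mean X\leq pm$.

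For the second inequality, I would first rewrite $\prob{Y\geq 1}\mean{X\mid(Y\geq 1)}=\mean{X\,\mathbb I_{\{Y\geq 1\}}}$, and then couple away the hypothesis $X\lest\Po m$. Take $X'\sim\Po m$ with $X\leq X'$ almost surely, and build $Y$ and $Y'$ on the same probability space from a single sequence of independent Bernoulli$(p)$ coins (independent of $(X,X')$), letting $Y$ count successes among the first $X$ tosses and $Y'$ among the first $X'$. Then $Y\sim\operatorname{Bin}(X,p)$, $Y'\sim\operatorname{Bin}(X',p)$, $Y\leq Y'$, and hence $X\,\mathbb I_{\{Y\geq 1\}}\leq X'\,\mathbb I_{\{Y'\geq 1\}}$ pointwise, so it suffices to bound $\mean{X'\,\mathbb I_{\{Y'\geq 1\}}}$, i.e.\ to treat the case $X\sim\Po m$. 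In that case Lemma~\ref{poisson+} applied with $k=1$ shows that $X'\mid(Y'\geq 1)$ is stochastically dominated by $1+\Po m$, so $\mean{X'\mid(Y'\geq 1)}\leq m+1$; combined with $\prob{Y'\geq 1}\leq\mean{Y'}=pm$ this gives $\mean{X'\,\mathbb I_{\{Y'\geq 1\}}}=\prob{Y'\geq 1}\mean{X'\mid(Y'\geq 1)}\leq pm(m+1)$, as required.

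I expect the only real obstacle to be the conditioning on the (typically rare) event $\{Y\geq 1\}$: conditioning on a rare event could a priori inflate $\mean X$ far beyond $m$, and what prevents this is exactly the Poisson thinning decomposition underlying Lemma~\ref{poisson+}, namely that $X'\mid(Y'\geq 1)$ splits as an independent sum of $Y'\mid(Y'\geq 1)$ and an unconditioned $\Po{(1-p)m}$. A secondary point to get right is that the coupling must realise $X\leq X'$ and $\{Y\geq 1\}\subseteq\{Y'\geq 1\}$ simultaneously, which the shared-coin construction above ensures.
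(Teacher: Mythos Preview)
Your proof is correct and follows essentially the same approach as the paper: couple $X\leq X'\sim\Po m$ with $Y\leq Y'$ via shared coins, then bound $\mean{X\mathbb I_{\{Y\geq 1\}}}\leq\mean{X'\mathbb I_{\{Y'\geq 1\}}}=\prob{Y'\geq 1}\mean{X'\mid Y'\geq 1}$ and apply \Lr{poisson+}. The only cosmetic difference is that the paper splits the inequality $\mean{X\mathbb I_{\{Y\geq 1\}}}\leq\mean{X'\mathbb I_{\{Y'\geq 1\}}}$ into two steps ($X\to X'$, then $\{Y\geq1\}\to\{Y'\geq1\}$), whereas you pass directly via the pointwise bound.
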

\begin{proof}We can couple $X$ with a variable $X'\sim\Po m$, and couple $Y$ with $Y'\sim\operatorname{Bin}(X',p)$ in the natural way, so that $(Y\geq 1)\subseteq(Y'\geq 1)$. Then, since $0\leq X\leq X'$ we have
\begin{align*}\prob{Y\geq 1}\mean{X\mid Y\geq 1}&\leq\prob{Y\geq 1}\mean{X'\mid Y\geq 1}\\
&\leq \prob{Y'\geq 1}\mean{X'\mid Y'\geq 1}.\end{align*}
Lemma \ref{poisson+} gives $\mean{X'\mid Y'\geq 1}\leq m+1$, and $\prob{Y\geq 1}\leq\prob{Y'\geq 1}\leq\mean{Y'}=mp$.
\end{proof}
We may sample $S_t$ by using a Yule process $Y_t$ of rate $1$ for the splitting events, then determining the movement of new edges and removing any vertices which were isolated at any point in their history. Similarly, we can simulate a Yule process $Y^{(r)}_t$ of rate $r<1$ from the same copy of $Y_t$ by, independently for each splitting event, removing all descendants of one offspring with probability $1-r$. Conditional on $Y_t$, each vertex which has undergone $k$ splitting events has probability $\bigl(\frac{r+1}{2}\bigr)^k$ of surviving in $Y_t^{(r)}$. In $S_t$, conditional on a vertex having survived $j$ splits without being isolated, we argue by induction on $j$ that the number of edge-ends meeting it is dominated by $\Po{1+\la}$. This is true for $j=0$. Assuming the statement holds for $j$, a vertex which has split $j+1$ times may inherit the first edge-end from its parent, and receives at most $\Po{\la}$ other edge-ends; conditioning on not being isolated does not change the number of additional edge-ends if it did inherit the first edge, and increases it to at most $1+\Po{\la}$ if not. So the result holds for all $j$. Consequently, given that a vertex has survived $j$ splits without being isolated, its offspring after the next split have at most $\operatorname{Ber}(1/2)+\Po{\la}$ edge-ends, so are each isolated with probability at least $\frac{\ee{-\la}}{2}$; it follows that each vertex in $Y_t$ which underwent $k$ splitting events has probability $\bigl(\frac{2-\ee{-\la}}{2}\bigr)^k$ of surviving in $S_t$. For $r=1-\ee{-\la}$, each vertex has a higher probability of surviving in $Y^{(r)}_t$ than $S_t$, and so we have \[\mean{\abs{S_t}}\leq\mean{\abs{Y^{(r)}_t}}=\ee{(1-\ee{-\la})t}.\]

Lemma \ref{poisson+} implies that $N\mid A \lest 1+\Po{\la/2}$. To see this, note that we may first condition on the tree of splitting events. For each possible tree $T$, each old edge independently has some probability $p_T$ of following a path in the tree which splits fewer than $\alpha t$ times; we may ignore trees for which $p_T=0$. Thus $N\mid T,A\lest1+\Po{\la/2}$, and the result follows by averaging over $T$.

Now we consider the singleton-free process conditional on $A$. Suppose a vertex meeting an old edge or root splits, and one of the new vertices created, $v$, does not meet a new edge or the root. Conditioning on $A$ does not affect the future evolution of $v$, and it evolves as the non-root half of a singleton-free process (or is discarded if it has no new edges). Thus the expected number of descendants of $v$ is at most $\mean{\abs{S_t}}/2=\ee{(1-\ee{-\la})t}$. For each old edge, the expected number of times it splits is $t$ before conditioning on A, and cannot increase after conditioning; the same applies to the root. Thus the expected number of times such a vertex is created is at most $(2+\la)t$. Since every vertex at time $t$ is either a descendant of such a vertex, meets an old edge, or is the root, we have
\[\mean{\abs{\dmot}\mid A}\leq\mean{\abs{S_t}\mid A}\leq2+\la+(2+\la)t\ee{(1-\ee{-\la})t},\]
and so (recalling \eqref{A-prob}) we have
\begin{equation}\prob{A}\mean{\abs{\dmot}\mid A}=O(t\ee{-t\ee{-\la}/2}).\label{A-total}\end{equation}

\subsection{Dealing with event B}
Since there are $X\sim\Po{\la/2}$ left-endpoints of old edges and one root, and each pair has probability $2^{-\alpha t/3}$ of coinciding after $\alpha t/3$ splits, a union bound gives
\begin{equation}\prob{B}\leq\mathbb{E}\biggl(\binom{X+1}2\biggr)2^{-\alpha t/3}=\biggl(\frac{\la^2}{8}+\frac{\la}{2}\biggr)2^{-\alpha t/3}.\label{B-prob}\end{equation}

Consider the full tree of possible locations for left-endpoints after $\alpha t/3$ splits. Order these locations $v_1,\ldots,v_{2^{\alpha t/3}}$; without loss of generality we may assume the root is at $v_1$ after $\alpha t/3$ splits. Writing $X_i$ for the number of old edges at location $v_i$ after $\alpha t/3$ splits, the $X_i$ are \iid $\Po{2^{-\alpha t/3}\la/2}$ random variables.
We will control the expected number of old edges conditioned on $B$. $B$ occurs if and only if either $X_1\geq 1$ or $X_i\geq 2$ for some $i>1$. For each $i\geq 2$, let $B_i$ be the event that $X_i\geq 2$, and $X_j\leq 1$ for each $j>i$. Let $B_1$ be the event that $X_1\geq 1$ but $X_j\leq 1$ for each $j>1$. Now the events $(B_i)_{i=1}^{2^{\alpha t/3}}$ form a partition of $B$, and $N=\sum_{j=1}^{2^{\alpha t/3}} X_j$. \Lr{poisson+} gives $\mean{X_i\mid B_i}\leq\mean{X_i}+2$, and $\mean{X_j\mid B_i}\leq\mean{X_j}$ if $j\neq i$,
so $\mean{N\mid B_i}\leq \mean{N}+2$ for each $i$. Thus 
\[\mean{N\mid B}=\sum_{i=1}^{2^{\alpha t/3}}\prob{B_i\mid B}\mean{N\mid B_i}\leq\la/2+2.\]
The old edges therefore combine, on average and conditional on $B$, at most $\la/2+3$ components from the two copies. Since $B$ does not depend on splitting times or new edges, each component has expected size $\mean{\abs{\cmot}}$. Thus, recalling \eqref{B-prob}, we have
\begin{equation}\prob{B}\mean{\abs{\dmot}\mid B}=O(2^{-\alpha t/3})\mean{\abs{\cmot}}.\label{B-total}\end{equation}
\subsection{Dealing with event C}\label{sec:eventC}
Randomly designate one end of each new edge to be the ``head'', and the other the ``tail'', so that the number of edges $xy$ with head $x$ and the number with head $y$ are independent. We set $C_{\mathrm h}$ ($C_{\mathrm t}$) to be the event that the head (the tail) of some new edge coincides with the left end of some old edge for the period in question. Since $C=C_{\mathrm h}\cup C_{\mathrm t}$ and by symmetry of $C_{\mathrm h},C_{\mathrm t}$, we have $\prob{C}\mean{\abs{\dmot}\mid C}\leq2\prob{C_{\mathrm h}}\mean{\abs{\dmot}\mid C_{\mathrm h}}$.

We first condition on $\cmp B$; since $\prob{B\mid N=n}$ is increasing in $n$, $(N\mid \cmp{B})\lest\Po{\la/2}$. Given $\cmp{B}$, each of the $N\mid\cmp{B}$ old edges coincides with the head of a new edge for the period between its $(\alpha t/3)$th and $(2\alpha t/3)$th splits independently and with equal probability $p$. 
Since the number of heads coinciding with a given old edge at the start of the period is distributed $\Po{\kappa}$ for some fixed $\kappa\leq\la/2$, a union bound gives $p\leq2^{-\alpha t/3}\la/2$. Thus
\begin{equation}\prob{C}\leq2\prob{C_{\mathrm h}\mid \cmp B}\leq2^{-\alpha t/3}\la^2/2.\label{C-prob}\end{equation}
Also, Corollary \ref{sub-poisson} gives $\prob{C_{\mathrm h}\mid \cmp{B}}\mean{N\mid C_{\mathrm h}}\leq 2^{-\alpha t/3}(\la/2)^2(1+\la/2)$.

Next we bound $\prob{C_{\mathrm h}}\mean{\abs{\dmot}\mid C_{\mathrm h}}$. Lemma \ref{poisson+} implies that the number of heads which coincide with a given old edge, after conditioning on $C_{\mathrm h}$, is dominated by $1+\Po{p\kappa}$, and the number of other new edges is independent of $C_{\mathrm h}$. Thus we may couple the new edges conditioned on $C_{\mathrm h}$ as a subgraph of the unconditioned new edges together with at most $N\mid C_{\mathrm h}$ additional new edges. The expected size of the component of a given vertex in the subgraph of unconditional new edges is $\mean{\abs{\cmot}}$, and since the old edges and additional new edges merge at most $2\mean{N\mid C_{\mathrm h}}+1$ components on average, 
\begin{align}\prob{C}\mean{\abs{\dmot}\mid C}&\leq2\prob{C_{\mathrm h}}(2\mean{N\mid C_{\mathrm h}}+1)\mean{\abs{\cmot}}\nonumber\\
&=\mean{\abs{\cmot}}O(2^{-\alpha t/3})\label{C-total}.\end{align}

\subsection{Dealing with event D}
Now suppose that $B$ and $C$ do not occur. Again, we have $N\mid \cmp{B}\cap\cmp{C}\lest\Po{\la/2}$. Since $C$ does not occur, all new edges that meet left ends of old edges after $2\alpha t/3$ splits were created after the $(\alpha t/3)$th split, and since $B$ does not occur, each of these meets only one old edge. Thus every old edge is not killed between its $(2\alpha t/3)$th and $\alpha t$th splits independently with some probability $p$. Corollary \ref{sub-poisson} therefore gives $\prob{D}\mean{N\mid D}\leq p(\la/2)(\la/2+1)$.

We next bound $p$. Note that being killed is monotone on adding new edges. Suppose that after a given split an old edge $e$ meets $X_0\lest 1+\Po{\la}$ new edges. Adding extra new edges, if necessary, we may assume $e$ meets $1+\Po{\la}$ new edges. After the next split, conditioned on $e$ meeting at least one new edge, we claim that it meets $X_1\lest 1+\Po{\la}$ new edges. If the first of the $1+\Po{\la}$ new edges still meets $e$, there are $\Po{\la/2}+\Po{\la/2}\sim\Po{\la/2}$ other new edges meeting $e$, whereas if not we have $\Po{\la}$ edges meeting $e$, conditioned to be positive, and by Lemma \ref{poisson+} this is dominated by $1+\Po{\la}$.
Thus conditioning on not having been killed at the previous step leaves at most $1+\Po\la$ new edges meeting $e$, giving a probability of at least $\frac12\ee{-\la/2}$ of being killed at the next step; write $c_\la=1-\frac12\ee{-\la/2}$. It follows that $p\leq c_\la^{\alpha t/3}$ and so
\begin{equation}\prob{D}\leq \mean{N\mid \cmp B\cap \cmp C}p\leq c_\la^{\alpha t/3}\la/2.\label{D-prob}\end{equation}

For each old edge $e$, we associate each new edge $e'$ which meets $e$ at any point between its $(2\alpha t/3)$th and $\alpha t$th splits with the interval for which it meets $e$, \ie the set of indices in $\{\alpha t/3,\ldots,\alpha t\}$ of splits after which $e$ and $e'$ meet. Denote the number of new edges meeting $e$ for an interval $I$ by $X_{e,I}$; note that $X_{e,I}\sim\Po{\kappa_I}$ for some $\kappa_I$ depending only on $I$, and all these are independent. We now condition on the number of old edges and which pairs $e,I$ have $X_{e,I}\geq 1$; this is sufficient information to determine whether $D$ occurs. Lemma \ref{poisson+} gives $X_{e,I}\mid D\lest1+\Po{\kappa_I}$ for each $e,I$.
We can thus couple the new edges conditioned on $D$ as a subgraph of the unconditioned new edges together with at most $(N\mid D)(2\alpha t/3)^2$ additional new edges. As in Section \ref{sec:eventC}, it follows that
\begin{equation}\prob{D}\mean{\abs{\dmot}\mid D}=\mean{\abs{\cmot}}O(t^2c_\la^{\alpha t/3}).\label{D-total}\end{equation}
\subsection{Final bounds}
If none of $A,B,C,D$ occur then all old edges have been killed. Since this means any path from the root uses only new edges (see the proof of \Lr{onevertex}), the component of the root is entirely within the left half, and thus we have 
\begin{align*}\mean{\abs\dmot\mid \cmp{(A\cup B\cup C \cup D)}}&=\mean{\abs{\cmot}\mid \cmp{(A\cup B\cup C \cup D)}}\\
&\leq\mean{\abs{\cmot}}/\prob{\cmp{(A\cup B\cup C \cup D)}}.\end{align*} 
Combining \eqref{A-prob}, \eqref{A-total}, \eqref{B-prob}, \eqref{B-total}, \eqref{C-prob}, \eqref{C-total}, \eqref{D-prob} and \eqref{D-total} using \eqref{combineABCD}, we have 
\[\mean{\abs\dmot}=(1+o(\zeta^t))\mean{\abs{\cmot}},\]
for some $\zeta<1$, as required for \eqref{expbound}, which thus completes the proof of finiteness.

\section{A sharp threshold for connectedness}\label{thresasync}

In this section we prove \Tr{threshold}, giving a sharp threshold for connectedness of $\gftc$. We show that, as for the binomial random graph, it coincides with the threshold for isolated vertices to appear. Our methods in this section will follow those of \cite{PIGG} closely. 
For both directions we will need the following simple concentration bound.
\begin{lemma}\label{leaves}
Let $f(t):(0,\infty)\to(0,\infty)$ be any function with $f(t)\to\infty$ as $t\to\infty$. Then with high probability we have $\ee{t-f(t)}<\abs{\gftc}<\ee{t+f(t)}$.\end{lemma}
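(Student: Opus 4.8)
The plan is to exploit Remark \ref{geo-yule}: the number of vertices $\abs{\gftc}$ is exactly the value at time $t$ of a rate-$1$ Yule process started from a single individual, which has a geometric distribution with mean $\ee{t}$, and whose normalised value $\ee{-t}\abs{\gftc}$ converges almost surely to an $\operatorname{Exp}(1)$ limit $W$. So I would first recall (or quickly re-derive) that $\prob{\abs{\gftc}=k}=\ee{-t}(1-\ee{-t})^{k-1}$ for $k\geq 1$. This makes both tail bounds completely explicit. For the upper tail, $\prob{\abs{\gftc}\geq \ee{t+f(t)}} = (1-\ee{-t})^{\ceil{\ee{t+f(t)}}-1}\leq \exp\bigl(-\ee{-t}(\ee{t+f(t)}-1)\bigr)$, which behaves like $\exp(-\ee{f(t)}+\ee{-t})\to 0$ as $t\to\infty$ since $f(t)\to\infty$. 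For the lower tail, $\prob{\abs{\gftc}\leq \ee{t-f(t)}} = 1-(1-\ee{-t})^{\floor{\ee{t-f(t)}}}\leq \ee{-t}\ee{t-f(t)} = \ee{-f(t)}\to 0$ (using $1-(1-x)^m\leq mx$). Combining the two gives the claim.

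I would present this as the main argument, noting that one only needs a one-line appeal to the geometric distribution rather than any martingale machinery; the convergence of $\ee{-t}\abs{\gftc}$ to $\operatorname{Exp}(1)$ is a pleasant interpretation but not logically required. A minor point to handle carefully is the interplay of floors and ceilings with the event boundaries (whether the inequalities in the statement are strict), but since $f(t)\to\infty$ the rounding affects nothing asymptotically, so I would just absorb it into the estimates above; one could even replace $f(t)$ by $f(t)/2$ at the outset to get clean room for the rounding.

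There is essentially no obstacle here: the whole point is that $\abs{\gftc}$ has an exact, well-understood distribution. The only thing to be slightly watchful about is making sure the bound is stated with ``with high probability as $t\to\infty$'' consistent with its later use in the proof of \Tr{threshold} (where $t$ will be of order $\la$ and $\la\to\infty$), but since the estimates above give explicit convergence rates $\exp(-\ee{f(t)}(1+o(1)))$ and $\ee{-f(t)}$, any such reparametrisation is immediate. I would end the proof by remarking that these rates are in fact strong enough to survive a union bound over polynomially many such events, which is what the connectedness argument in the next section needs.
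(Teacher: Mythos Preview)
Your proposal is correct and follows essentially the same approach as the paper: both use directly that $\abs{\gftc}\sim\operatorname{Geo}(\ee{-t})$ and bound the two tails via the explicit formula $(1-\ee{-t})^{k}$. The paper is even terser, writing $\prob{n_1<\abs{\gftc}<n_2}=(1-\ee{-t})^{n_1}-(1-\ee{-t})^{n_2-1}\to 1$ in one line, but your slightly more explicit estimates $\ee{-f(t)}$ and $\exp(-\ee{f(t)}(1+o(1)))$ are correct and harmless.
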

\begin{proof}
Set $n_1=\ceil{\ee{t-f(t)}}$ and $n_2=\floor{\ee{t+f(t)}}$. Since $\abs{\gftc}\sim\operatorname{Geo}(\ee{-t})$, we have
\[\prob{n_1<\abs{\gftc}<n_2}=(1-\ee{-t})^{n_1}-(1-\ee{-t})^{n_2-1}\underset{t\to\infty}{\longrightarrow} 1.\qedhere\]
\end{proof}
We first show that isolated vertices appear soon after time $\la$.
\begin{proposition}\label{async-iso}
Let $f(t):(0,\infty)\to(0,\infty)$ be any function with $f(\la)\to\infty$ as $\la\to\infty$. If $t>\la+f(\la)$ then as $\la\to\infty$ with high probability $\gftc$ has an isolated vertex.
\end{proposition}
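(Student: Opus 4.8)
The plan is to show that after time $\la$, a positive proportion of the newly created vertices are ``born isolated'' and, moreover, that such a vertex has a reasonable chance of surviving (staying isolated) for the remaining time, so that at the final time $t$ at least one isolated vertex is present with high probability. First I would fix a time $s$ slightly larger than $\la$, say $s = \la + f(\la)/2$, and consider the vertices of $\gftc[s]$. By Remark~\ref{geo-yule} and Lemma~\ref{leaves}, with high probability $\abs{\gftc[s]}$ is at least $\ee{s - f(\la)/4}$, which is enormous. The key local computation is the following: when a vertex $v$ splits at some time $\sigma$, one of its two offspring, say $v_2$, receives none of $v$'s inherited edges (each inherited edge goes to $v_2$ with probability $1/2$ independently) and receives $\Po{\la/2}$ new edges to $v_1$; conditioning on the degree of $v$ being $d$ at the split, the probability that $v_2$ is isolated immediately after the split is $2^{-d}\ee{-\la/2}$. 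Since the degree of $v$ is itself the relevant quantity, the main obstacle is to obtain a lower bound, uniform over the vertices we consider, on the probability that $v$ has \emph{small} degree at the time it splits --- equivalently, to rule out the possibility that degrees have all grown so large by time $s$ that no vertex can ever shed all its edges.

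To handle this, I would track a single ``spine'' of descendants from the root and argue, as in the Markov-chain computation in the proof of Lemma~\ref{onevertex}, that the number of edges meeting the current spine vertex behaves like a positive-recurrent chain with stationary distribution $\Po{\la}$: at each split it maps $X \mapsto \operatorname{Bin}(X,1/2) + \Po{\la/2}$. Hence at a typical split the spine vertex has degree $O(\la)$ with probability bounded below by a constant $c(\la) > 0$, and conditioned on this its non-inherited offspring is isolated with probability at least $c(\la) \cdot 2^{-O(\la)} =: q(\la) > 0$. Running the spine for time $s$ produces $\Po{s} \approx \la$ splits in expectation, but more usefully, running it from time $\la$ up to time $t$ produces many splits, each an independent chance (given the chain) of spawning an isolated offspring; and once such an isolated offspring $w$ is created at time $\sigma < t$, it remains isolated at time $t$ precisely if it does not split in $(\sigma, t)$ --- this is one of $w$'s inherited structure, but actually $w$ \emph{can} split, so I instead track whether $w$ or one of its descendants stays isolated: an isolated vertex splits into two isolated vertices with probability $\ee{-\la/2}$, so an isolated subtree survives as long as a subcritical-type branching condition... more carefully, an isolated vertex that splits gains $\Po{\la/2}$ edges to its sibling, so it is \emph{not} isolated after splitting unless the Poisson count is $0$. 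Thus the cleanest route is: an isolated vertex created at time $\sigma$ is still isolated at time $t$ iff it has not split by time $t$, which has probability $\ee{-(t-\sigma)}$; to make this non-negligible I should look at isolated vertices created \emph{late}, within the last $O(1)$ window before $t$.

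Concretely, then, I would condition on $\gftc[t-1]$ having at least $\ee{t-1-f(\la)/4}$ vertices (true whp by Lemma~\ref{leaves}), and on each of these, independently, there is a chance during $(t-1,t)$ of: splitting at least once, producing an inherited-edge-free offspring, that offspring receiving $0$ new edges, and that offspring not splitting again before $t$. The product of these probabilities is bounded below by some $q'(\la) > 0$ depending only on $\la$ (here I use the degree bound from the spine-type argument to control the inherited degree of the splitting vertex; care is needed because the vertices of $\gftc[t-1]$ are not independent and their degrees are correlated, so I would instead bound the expected number of such isolated vertices from below and use that most vertices of $\gftc[t-1]$ have been in the graph only a short time and hence have $O(\la)$-bounded degree in expectation, or appeal to the stationary $\Po{\la}$ heuristic via a domination argument). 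Since $\ee{t-1-f(\la)/4} \cdot q'(\la) \to \infty$ as $\la \to \infty$ (using $t > \la + f(\la)$ and $f(\la) \to \infty$), a second-moment or Poissonisation argument then gives that whp at least one isolated vertex exists at time $t$. The main obstacle, as flagged, is making the degree control rigorous uniformly over the relevant vertices despite their dependence --- I expect this is where the bulk of the work lies, and where a stochastic-domination statement (in the spirit of Lemma~\ref{poisson+}, showing each vertex's degree is dominated by $1 + \Po{\la}$ or similar conditionally on its history) will be needed.
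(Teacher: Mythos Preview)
Your approach is quite different from the paper's and, while the broad strategy is plausible, the gaps you flag are genuine and are precisely what the paper's method sidesteps. The paper does not track the dynamics at all. Instead it works conditionally on the genealogical tree $T(t)$, so that $\gftc$ is simply the $T(t)$-Poisson edge model on the leaves. After a harmless modification (adding $\Po{\la/2}$ extra edges at the first split, which only makes isolated vertices less likely), every leaf is isolated with probability exactly $\ee{-\la}$, regardless of $T(t)$. The paper then quotes a ready-made correlation inequality, \cite[Lemma~7.1]{PIGG}, which gives $\prob{\text{no isolated vertex}\mid T(t)}\leq 2/(2+\abs{L(t)}\ee{-\la})$, and combines this with \Lr{leaves}. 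There are no spines, no last-window arguments, and no second-moment computation.

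Your dynamic route---looking for vertices born isolated in a final window $(t-1,t)$ that then fail to split---can in principle be made to work, but both obstacles you identify are real. First, the bound ``$q'(\la)>0$'' on the per-vertex success probability is not uniform: it depends on the degree of the splitting vertex, so you must first show that a positive fraction of vertices at time $t-1$ have degree $O(\la)$, which is exactly the uniform degree control you defer. Second, even granting a large first moment, the isolation events for different candidate vertices are not independent (nearby leaves share most of their potential edge partners in the Poisson edge model), so a second-moment argument requires a quantitative correlation bound. That bound is essentially the content of \cite[Lemma~7.1]{PIGG}; in other words, your route would end up re-deriving a dynamic version of the very lemma the paper simply cites. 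Passing to the static Poisson edge model at the outset avoids all of this.
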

\begin{proof}
For technical reasons we prove the same statement for the modified process obtained by adding $\Po{\la/2}$ extra edges at the first splitting event. This ensures that each vertex has the same probability $\ee{-\la}$ of being isolated. Conditioned on the tree $T(t)$ defined in the proof of \Tr{limit}, \cite[Lemma 7.1]{PIGG} applies and gives $\prob{X\mid T(t)}\leq 2/(2+\abs{\gftc}\ee{-\la})$, where $X$ is the event that no vertex is isolated.
Note that $\la+f(\la)/2<t-g(t)$, where $g(t)$ is another function satisfying $g(t)\to\infty$. By \Lr{leaves}, with high probability $\abs{\gftc}>\ee{t-g(t)}$;
conditional on this we have $\prob{X}\leq2/(2+\ee{f(\la)/2})=o(1)$.
\end{proof}
To complete the proof of \Tr{threshold}, we must show that with high probability $\gftc(\la)$ is connected shortly before $t=\la$.
For this we need another result from \cite{PIGG}, but first we define some terms used. Fix a finite binary tree $T$ representing descendants of a marked apex vertex, and $k\in\mathbb N$. We say that two vertices are \textit{siblings} if they have the same parent, and two pairs of siblings are \textit{$k$-cousins} if they have a common ancestor which is at most distance $k$ on $T$ from all of them. Let $G$ be a graph whose vertices are leaves of $T$. We say two siblings $x,y$ are \textit{strongly linked} by $G$ if $G$ contains an edge between a descendant of $x$ and a descendant of $y$, and \textit{weakly linked} by $G$ if there is some vertex $z$ of $T$ which is a sibling of one of the $k$ lowest ancestors of $x,y$, such that $G$ contains edges between a descendant of $x$ and one of $z$, and between a descendant of $y$ and one of $z$. \cite[Lemma 7.2]{PIGG} says that the following set of conditions is sufficient for $G$ to be connected:
\begin{enumerate}[(i)]\item\label{siblings} every pair of siblings in $T$ is either strongly linked or weakly linked by $G$;
\item\label{cousins} of every two pairs which are $k$-cousins, at least one is strongly linked by $G$;
\item\label{patriarchs} any pair of siblings within the top $k$ layers of $T$ are strongly linked by $G$.\end{enumerate}
\begin{proposition}\label{async-conn}For any $\alpha>1$, if $t\leq\la-\alpha\log\la$ then as $\la\to\infty$ with high probability $\gftc$ is connected.\end{proposition}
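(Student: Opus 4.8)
Mirroring the synchronous case treated in \cite{PIGG}, the plan is to condition on the genealogical tree $T = T(t)$ of $\gftc$ — which is distributed as the tree $T(t)$ from the proof of \Tr{limit} — and then to apply \cite[Lemma 7.2]{PIGG} with $k = \ceil{\log_2\la}$. The key structural observation is that, conditionally on $T$, the graph $\gftc$ is built as follows: to each internal vertex $v$ of $T$ is attached an independent ``pool'' $P_v \sim \Po{\la/2}$ of edges, namely those created when $v$ split, and each edge of $P_v$ is routed down to a pair of leaves — one in each of the two subtrees below $v$ — by independent fair coin flips at the subsequent splits, all of this independent of $T$. Since every edge of $P_v$ joins the two subtrees below $v$ while no edge of any other pool does, a pair of siblings with parent $v$ is strongly linked exactly when $P_v \neq \emptyset$; in particular sibling pairs are strongly linked independently, each with probability $1 - \ee{-\la/2}$. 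It thus suffices to show that (\ref{siblings}), (\ref{cousins}) and (\ref{patriarchs}) all hold with high probability.

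Conditions (\ref{patriarchs}) and (\ref{cousins}) need only routine union bounds. There are at most $2^{k+1}$ sibling pairs in the top $k$ layers of $T$, so (\ref{patriarchs}) fails with probability at most $2^{k+1}\ee{-\la/2} = O(\la\ee{-\la/2}) = o(1)$. For (\ref{cousins}), distinct sibling pairs have distinct parents, hence independent strong-linking events, so two sibling pairs that are $k$-cousins are both non-strongly-linked with probability $\ee{-\la}$; a given sibling pair is a $k$-cousin of at most $O(k2^k)$ others, and there are $\abs{\gftc} - 1$ sibling pairs in all, so conditionally on $T$ the probability that (\ref{cousins}) fails is $O\bigl(\abs{\gftc}\,k2^k\ee{-\la}\bigr)$. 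Since $\abs{\gftc} \lest \operatorname{Geo}(\ee{-t})$ (Remark~\ref{geo-yule}) and $\ee{t} \le \ee{\la}\la^{-\alpha}$ by hypothesis, and since $\alpha > 1$, we have $\abs{\gftc} \le m := \floor{\ee{\la}\la^{-(1+\alpha)/2}}$ with high probability, and on that event the bound above is $O(\la^{(1-\alpha)/2}\log\la) \to 0$.

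The crux is condition (\ref{siblings}). Take a sibling pair $\{x,y\}$ with parent $p_0$, and let $p_1, \ldots, p_k$ be the next $k$ ancestors (so $p_{j+1}$ is the parent of $p_j$); for each $j = 0, \ldots, k-1$ the pair can be weakly linked via the subtree rooted at the sibling of $p_j$. The point is that strong linking and these $k$ weak-linking options draw on the $k+1$ distinct pools $P_{p_0}, P_{p_1}, \ldots, P_{p_k}$, so their failure events are independent. An edge of $P_{p_{j+1}}$ is routed to a leaf of $x$'s subtree with probability $2^{-j-1}$ and, disjointly, to a leaf of $y$'s subtree with the same probability, so the numbers of edges doing each are independent $\Po{\la 2^{-j-2}}$ variables; hence weak linking via this uncle fails with probability at most $2\ee{-\la 2^{-j-2}}$, and multiplying over $j$ and including the factor $\ee{-\la/2}$ for strong linking, the pair is neither strongly nor weakly linked with probability at most
\[ \ee{-\la/2}\prod_{j=0}^{k-1} 2\ee{-\la 2^{-j-2}} = 2^k\ee{-\la + \la 2^{-k-1}}. \]
For $k = \ceil{\log_2\la}$ the factor $\ee{\la 2^{-k-1}}$ is at most $\ee{1/2}$, so this is $O(\la\ee{-\la})$. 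Summing over the at most $\abs{\gftc} - 1$ sibling pairs all of whose ancestors $p_1, \ldots, p_k$ exist (the remaining $O(2^k)$ lie in the top $k$ layers and are handled by (\ref{patriarchs})), we get that on $\{\abs{\gftc} \le m\}$ condition (\ref{siblings}) fails with probability $O(\la^{(1-\alpha)/2}) + O(\la\ee{-\la/2}) \to 0$. Combining the three estimates and averaging over $T$ completes the proof.

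I expect (\ref{siblings}) to be the main obstacle. The difficulty is that a union bound over the $\approx\ee{t}$ sibling pairs needs the per-pair failure probability to be $o(\ee{-t}) = o(\ee{-\la}\la^{\alpha})$, while the total budget available to weakly link a single pair across all of its uncle subtrees is only $\la\sum_{j=0}^{k-1}2^{-j-2} = \tfrac{\la}{2} - \la 2^{-k-1}$, short of the ideal $\tfrac{\la}{2}$ by exactly $\la 2^{-k-1}$; choosing $k \approx \log_2\la$ balances this shortfall against the $2^k$ factor in the weak-linking bound, and it is precisely this trade-off that forces the hypothesis $\alpha > 1$ in our argument. By contrast, the passage from the conditional statement back to the unconditional one — and dealing with atypically small $T$ — is routine, since all the per-pair estimates above hold for every realisation of $T$ and only the number of relevant pairs depends on $T$.
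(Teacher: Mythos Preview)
Your argument is correct and follows essentially the same route as the paper: condition on the genealogical tree, take $k\approx\log_2\la$, and verify the three hypotheses of \cite[Lemma~7.2]{PIGG} via the same union bounds, using the Poisson structure of the edge pools at each internal vertex and the high-probability size bound $\abs{\gftc}\le\ee{t+o(t)}$. The only cosmetic differences are that the paper invokes \Lr{leaves} in place of your direct geometric tail bound, and its per-pair failure estimate $2^{k-1}\ee{-\la(1-2^{-k})}$ differs from your $2^k\ee{-\la(1-2^{-k-1})}$ by an immaterial shift in indexing.
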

\begin{proof}
Regarding $\gftc$ as a random graph on the leaves $L(t)$ of $T(t)$, we will show the conditions above hold with high probability, for some suitable $k$. Choose $\alpha'>0$ such that $\alpha-\alpha'>1$; then, by \Lr{leaves}, with high probability $\abs{L(t)}<\ee{t+\alpha'\log t}<\ee{t+\alpha'\log\la}$, so
\begin{equation}\label{treesize}\abs{T(t)}<2\ee{t+\alpha'\log \la}.\end{equation} 

Suppose \eqref{treesize} holds, and set $k=\log_2\la$. The probability that a particular pair of siblings fails to be strongly linked is $\ee{-\la/2}$, and since each pair of siblings has at most $k2^k=\la\log_2\la$ pairs of $k$-cousins, the total number of ways to choose two pairs of siblings which are $k$-cousins is at most $\ee{t+\alpha'\log \la}\la\log_2\la=\ee{t+(1+\alpha')\log \la}\log_2\la=o(\ee{\la})$. For each such choice, the probability that neither pair is strongly linked by $\gftc(\la)$ is $\ee{-\la}$ and so with high probability \eqref{cousins} holds. There are at most $\la$ pairs of siblings in the top $k$ layers of $T(t)$, and so \eqref{patriarchs} also holds with high probability.
Finally, for a fixed pair of siblings below this point the probability that they are not strongly or weakly linked by $\gftc$ is 
\[\ee{-\la/2}\bigl(1-(1-\ee{-\la/4})^2\bigr)\cdots\bigl(1-(1-\ee{-\la/2^{k-1}})^2\bigr)<2^{k-1}\ee{-\la(1-2^{-k})}.\]
Thus the probability that some pair fails to be strongly or weakly linked is at most
\begin{align*}2^k\ee{-\la(1-2^{-k})}\ee{t+\alpha'\log\la}&=\la\ee{(t+\alpha'\log\la)-(t+\alpha\log\la)(1-1/\la)}\\
&=O(\la^{1+\alpha'-\alpha})=o(1).\qedhere\end{align*}
\end{proof}

\section*{Acknowledgements}
Both authors were supported by the European Research Council (ERC) under the European Union's Horizon 2020 research and innovation programme (grant agreement no.\ 639046). J.H. was also partially supported by the UK Research and Innovation Future Leaders Fellowship MR/S016325/1. We are grateful to the anonymous referee for their very helpful comments.

\end{document}